\newcolumntype{x}[1]{!{\centering\arraybackslash\vrule width #1}}
\newtheorem{theorem}{Theorem}
\newtheorem{remark}{Remark}
\newtheorem{corollary}{Corollary}
\newtheorem{definition}{Definition}
\newtheorem{conjecture}{Conjecture}
\newtheorem{proposition}{Proposition}
\newtheoremstyle{consequenceStyle}  
  {6pt}                             
  {6pt}                             
  {\itshape}                        
  {}                                
  {\bfseries}                       
  {.}                               
  { }                               
  {\thmname{#1}\thmnote{ \textbf{of Conjecture #3}}} 
\theoremstyle{consequenceStyle}
\newtheorem{consequence}{Consequence}
\newcommand{\N}{{\mathbb N}}
\newcommand{\R}{{\mathbb R}}
\newcommand{\C}{\mathbb C}
\newcommand{\Z}{{\mathbb Z}}
\newcommand{\E}{{\mathbb E}}
\renewcommand{\P}{{\mathbb P}}
\DeclareMathOperator\tr{tr}
\newcommand*\diff{\mathop{}\!\mathrm{d}}
\newcommand{\1}{\mathds{1}}
\newcommand{\eqd}{\stackrel{d}{=}}
\let\@fnsymbol\@arabic
\newcommand{\specificthanks}[1]{\@fnsymbol{#1}}
\begin{document}
\mleftright

\title{\textbf{Random eigenvalues of graphenes and the triangulation of plane}}
\insert\footins{\footnotesize \noindent Artur Bille \hspace*{3cm} Simon Coste \\
\texttt{artur.bille@uni-ulm.de} \hspace*{0.9cm} \texttt{simon.coste@u-paris.fr}  \vspace*{0.2cm} \\
Victor Buchstaber \hspace*{2cm} Satoshi Kuriki \hspace*{2cm} Evgeny Spodarev\\ 
\texttt{buchstab@mi-ras.ru} \hspace*{1.6cm} \texttt{kuriki@ism.ac.jp}\hspace*{1.5cm} \texttt{evgeny.spodarev@uni-ulm.de}  \vspace*{0.2cm}\\
\textsuperscript{1}Ulm University, Ulm, Germany \quad
\textsuperscript{*}Corresponding author\quad
\textsuperscript{2}Steklov Mathematical Institute RAN, Moscow, Russia\\
\textsuperscript{3}HSE University, International Laboratory of Algebraic Topology and Its Application, Moscow, Russia \\
\textsuperscript{4}LPSM - Université Paris-Cité, Paris, France \quad
\textsuperscript{5}The Institute of Statistical Mathematics, Tokyo, Japan
}

\author{Artur Bille\textsuperscript{1*} 
\and 
Victor Buchstaber\textsuperscript{2,3}\and 
Simon Coste\textsuperscript{4}\and 
Satoshi Kuriki\textsuperscript{5}\and
Evgeny Spodarev\textsuperscript{1}
}
\date{}
\maketitle

\begin{abstract}
\label{abstract}
\noindent
We analyze the numbers of closed paths of length $k\in\N$ on two important regular lattices: the hexagonal lattice (also called {\it graphene} in chemistry) and its dual triangular lattice. 
These numbers form a  moment sequence of specific random variables connected to the distance of a position of a planar random flight (in three steps) from the origin. 
Here, we refer to such a random variable as a \textit{random eigenvalue} of the underlying lattice. 
Explicit formulas for the probability density and characteristic functions of these random eigenvalues are given for both the hexagonal and the triangular lattice. 
Furthermore, it is proven that both probability distributions can be approximated by a functional of the random variable uniformly distributed on increasing intervals $[0,b]$ as $b\to\infty$. 
This yields a straightforward method to simulate these random eigenvalues without generating graphene and triangular lattice graphs. 
To demonstrate this approximation, we first prove a key integral identity for a specific series containing the third powers of the modified Bessel functions $I_n$ of $n$th order, $n\in\Z$. 
Such series play a crucial role in various contexts, in particular, in analysis, combinatorics, and theoretical physics. 
\end{abstract}
\textbf{Keywords:} (modified) Bessel function, characteristic function, closed walk, graphene, holonomic function, random eigenvalue, regular hexagonal lattice.

\noindent
\textbf{MSC2010:} \textbf{Primary:}  05C10; \textbf{Secondary:} 05C63, 05C38, 33C10, 33C05, 92E10.

\section{Introduction}
\label{sec: Introduction}
In 2010, Sir Konstantin Novoselov and Sir Andre Geim were awarded the Nobel prize in Physics for their method to isolate single layers of \emph{graphene}. 
Graphene is a carbon allotrope in which the carbon atoms are arranged in a cut-out of an infinitely large hexagonal lattice. 
It can be considered as an extreme case of other finite-sized carbon allotropes like \emph{fullerenes}. 
The analysis of graphene is a mathematical problem with a long history. 
The hexagonal lattice is one of the well-known two-dimensional \textit{Bravais lattices}, cf. \cite[1.2.5.4]{wondra}. 
Although many (mathematical) results about graphene and its dual, the triangular lattice, are known and may be found, e.g., in \cite{grunbaum1987tilings,coxeter1973regular}, many more questions are still open. 
The similarities and differences between graphene and its finite-sized counterparts are also of great interest. 

In this paper, we study the spectral properties of these carbon allotropes. 
The spectral density of a lattice is linked with its combinatorial properties, like the numbers of closed paths rooted at some vertex. 
Indeed, let $\mu_k\left(\mathcal{L}\right)$ be the number of closed paths of length $k$ on the lattice $\mathcal{L}$ starting in an arbitrary vertex of $\mathcal{L}$.
For a finite graph $G_n$ with $n$ vertices, we denote by $\mu_k\left(G_n\right)$ the average number of closed paths of length $k$ on $G_n$. 
It can be easily seen that
\begin{align*}
\mu_k\left(G_n\right) = \frac{1}{n} \tr\left( A^k\right) = \frac{1}{n} \sum_{j=1}^{n} \lambda_j^k,
\end{align*} 
where $\lambda_1,\ldots,\lambda_{n}$ are the eigenvalues of the adjacency matrix $A$ describing the graph $G_n$. 
The term $\tr\left(A^k\right)$ is often referred to as the \textit{Newton polynomial of order} $k$. An in-depth analysis of Newton polynomials of order $k \leq n$ for (dual) fullerene graphs with $n \leq 150$ can be found in \cite{Bille2020SpectralCO}.
Therefore, $\mu_k(G_n)$ is the $k^{\text{th}}$ moment of a probability measure $\varrho_{G_n}$ with empirical distribution function given by
\begin{align}\label{def:esd}
F_{\varrho_{G_n}}(x)= \frac{1}{n}\sum_{j=1}^{n} \1\lbrace \lambda_j \leq x \rbrace. 
\end{align}
Since each eigenvalue $\lambda_j$ is chosen with uniform probability, $\varrho_{G_n}$ is the distribution of a random variable $L_n$ which will often be referred to as \textit{random eigenvalue} of $G_n$. 
The measure $\varrho_{G_n}$ is often called \emph{empirical spectral distribution} (ESD) of $G_n$. 
For lattices $\mathcal{L}$, i.e. infinite graphs with all vertices having a finite degree, one cannot directly define its spectral distribution as in \eqref{def:esd}, but one can still define $\varrho_{\mathcal{L}}$ through its moments: 
\begin{align}\label{def:DOS}
\int_\R x^k d\varrho_{\mathcal{L}}(x) = \mu_k\left(\mathcal{L} \right),\quad k\in\N.
\end{align} 
The measure $\varrho_{\mathcal{L}}$ is well-known as the \emph{density of states} or \emph{spectral density} of $\mathcal{L}$; it is a probability distribution. Any random variable with law $\varrho_{\mathcal{L}}$ will be called \emph{random eigenvalue} of $\mathcal{L}$. 

In both cases, the spectral density of a structure captures many properties of the geometry of the underlying graph or lattice. It is our primary goal to study the spectral properties of graphene. 
In this paper, we shed light on how the ESD $\varrho_{F_n}$ of large (but finite)  fullerenes $F_n$ approximates the density of states of graphene, $\varrho_{\mathcal{L}}$, and to analyze $\varrho_{\mathcal{L}}$ itself. 
In particular, we use a notion of graph convergence to show how graphene can be seen as an infinitely large fullerene, and the consequences on their respective spectral densities. 
Mathematically, the convergence of random fullerenes towards graphene is an open question (Conjecture \ref{conjecture}). 
The main part of the paper then deals with the spectral properties of $\mathcal{L}$. 
Although $\varrho_{\mathcal{L}}$ has been studied in the physics literature mostly under the lens of $\mathcal{L}$'s Green function, cf. \cite{horiguchi1972lattice}, its explicit expression is not commonly encountered in the literature. 
In this work, we show several identities and probabilistic representations for $\varrho_{\mathcal{L}}$, involving elegant and striking integral formulas for cylinder functions. 

The paper is structured as follows. In the next section, we formally define lattices and graphs under consideration, we also recall the notion of local weak convergence for graphs, and its consequences for the spectrum. 
Section \ref{sec:main results} contains our main results followed by their discussion. 
There, we propose a novel integral representation of the series of third powers of modified Bessel functions (Theorem \ref{theorem:Besselfunction}) which is used
to prove a simple approximation of the distribution of a random eigenvalue of the triangular and hexagonal lattices (Theorem \ref{theorem:approximation}). 
We give an independent proof of Theorem \ref{theorem:approximation} elucidating some connections to ergodic theory. 
Furthermore, we give explicit forms of the densities and the characteristic functions of random eigenvalues for graphene and the triangulation of plane.
Section \ref{sec:conjecture} explains the point of view of local weak convergence of fullerenes and gives some details on Conjecture \ref{conjecture}. 
Proofs are given in Section \ref{sec:proofs}.
The Python code supporting the results of this paper can be downloaded from \cite{RsoftBille23}.

\section{Spectral properties of graphenes and fullerenes}\label{sec:preliminaries}
In this section, we collect known definitions and results on Bessel functions, regular planar lattices, paths on them as well as planar random flights. In the sequel, $\N_0$ denotes the set of natural numbers and zero.

\subsection{Lattices and planar graphs}
A \textit{graph} $G=(V(G),E(G))=(V,E)$ is a tuple consisting of sets of vertices $V$ and of edges $E\subseteq V^2$. 
If $\left|V\right|=\infty$ and every vertex has a finite degree the graph $G$ is usually called an \textit{infinite graph}. 
An introduction to the general theory of infinite graphs can be found in \cite{diestel_graphtheory}. 
Here, we consider only infinite non--oriented graphs whose planar embedding forms a tiling of the plane $\R^2$ by convex regular polygons. 
We call such graphs \textit{(convex) lattices}. 

Let us define two lattices whose random eigenvalues are studied in the sequel. 
\begin{figure}[H]
  \begin{center}
    \includegraphics[scale=1]{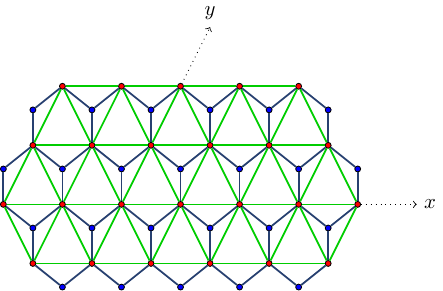}
    \caption{Illustration of the relationship between $\mathcal{H}$ and 
        $\mathcal{T}$.
        The set $V(\mathcal{H})$ consists of both blue and red-colored vertices, whereas $V(\mathcal{T})$ contains only the red-colored ones.
        Edges of $\mathcal{H}$ and $\mathcal{T}$ are colored in black and green, respectively.}\label{fig:T_H}
  \end{center}
\end{figure}
\begin{definition}\label{def:H_T_N_N*}
    We call the infinite graph 
    \begin{enumerate}[(i)]
        \item $\mathcal{H}$ a \textit{hexagonal lattice},
                if its set of vertices is given by
        \begin{align}\label{eq:V(h)}
            V(\mathcal{H})
            :=
            \left\lbrace \left(\sqrt{3} x + \frac{y\sqrt{3}}{2},\frac{3y}{2} + c\right)^{\intercal}\in\R^2 ~\middle|~ x,y\in\Z,  ~c\in\lbrace 0,1 \rbrace \right \rbrace,
        \end{align}
        and every vertex is connected with its three nearest neighbors (w.r.t.\ the Euclidean distance) by an edge;
        \item $\mathcal{T}$ a \textit{triangular lattice}, if its set of vertices is given by
        \begin{align*}
            V(\mathcal{T})
            :=
            \left\lbrace v\in V(\mathcal{H})~\middle|~ c=0 \right \rbrace
        \end{align*}
        and every vertex is connected with its six nearest neighbors (w.r.t.\ the Euclidean distance) by an edge. 
    \end{enumerate}
\end{definition}
Roughly speaking, the hexagonal lattice is composed of two triangular sublattices.
Adding three loops to every vertex of $\mathcal{T}$ and denoting this modification by $\mathcal{T}^*$, yields a simple bijection between the set of closed paths with even length $2k$ on $\mathcal{H}$ and the set of closed paths with length $k$ on $\mathcal{T}^*$. These three loops can be mathematically formulated as one loop of weight $3$.

\bigskip

Let us now turn to formal definitions of fullerenes. 
It is well known that Euler's formula and Eberhard's theorem impose hard constraints on the structure of finite planar graphs: 
for example, there can be no finite planar graph which is 3-regular and with only hexagonal faces. One has to introduce faces with degree smaller than 6 to fulfill these constraints; in fullerenes, one only allows faces with degree 5 or 6 and it turns out that the number of faces of degree 5 (called pentagons) must exactly be $12$. 

\begin{definition}
    A fullerene (graph) is a finite, connected, 3-regular planar graph with faces of degrees both 5 and 6. 
\end{definition}

\subsection{Fullerenes and the planar hexagonal lattice}\label{subsec:lwc}
Local weak convergence, also called Benjamini-Schramm convergence due to the seminal paper \cite{benjamini2011recurrence}, is a mode of convergence well suited to graphs with a small number of edges. 
We refer to \cite{benjaminicurien12,aldous2004objective,bordenave2016spectrum} for in-depth presentations; for our purposes, let us simply recall the basic elements. 

If $H=(V,E)$ is a graph and $v\in V$, we denote by $B_H(v,r)$ the ``ball of radius $r$ around $v$'', that is, the subgraph of $H$ induced by all the vertices of $H$ whose graph distance to $v$ is smaller than $r$. 
A graph $H$ endowed with a special vertex $o$ is called a \emph{rooted graph}; the set of all locally finite rooted graphs can be endowed with a Polish space structure. 
We say that (the law of) a sequence of finite random graphs $(G_n)$ converges locally weakly towards (the law of) a random rooted graph $(G,o)$ if for any fixed integer $r$ and fixed rooted graph $(H,o)$, 
\begin{equation}\label{def:BSConv} 
    \P(B_{G_n}(o_n, r) \text{ is isomorphic to } B_H(o,r)) 
    \xrightarrow[n\to\infty]{} 
    \P(B_{G}(o, r) \text{ is isomorphic to } B_H(o,r)),
\end{equation}
where $o_n$ is itself a random element uniformly distributed over the vertices of $G_n$. 
This notion of convergence indeed coincides with the local weak convergence of probability measures on the space of rooted graphs mentioned above. 
The limiting random rooted graph $(G,o)$ can be infinite and always has a property called \emph{unimodularity} which is very restrictive, see \cite{benjaminicurien12}. 
The convergence of random planar graphs has recently been widely studied, especially for uniform random triangulations of the sphere. 
However, to our knowledge, the following statement remains open. 

\begin{conjecture}\label{conjecture} 
    Let $F_n$ be a random fullerene, chosen uniformly on the set of fullerenes with $n$ vertices. 
    The sequence $(F_n)$ converges locally weakly towards the planar hexagonal lattice $\mathcal{H}$. 
\end{conjecture}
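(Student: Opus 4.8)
The plan is to reduce the conjecture to a statement about the typical \emph{global shape} of a large fullerene, and then to attack that statement through Thurston's realisation of fullerenes as flat cone metrics on the sphere. First, unwinding \eqref{def:BSConv}: since $\mathcal H$ is vertex-transitive, for each fixed $r$ the ball $B_{\mathcal H}(o,r)$ has a single isomorphism type, call it $D_r$, and because the candidate limit is deterministic, $(F_n)\to\mathcal H$ locally weakly is equivalent to
\[
\mathbf E\!\left[\frac1n\,\#\{v\in V(F_n):B_{F_n}(v,r)\not\cong D_r\}\right]\xrightarrow[n\to\infty]{}0 \qquad\text{for every $r$.}
\]
So it suffices to bound the expected number of ``defective'' vertices by $o(n)$.

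Second, I would classify the defects. Call $v$ \emph{defective} if some face meeting $B_{F_n}(v,r+1)$ is a pentagon, or if $B_{F_n}(v,r)$ is not a topological disc; a routine local inspection shows that a non-defective vertex satisfies $B_{F_n}(v,r)\cong D_r$, because when every nearby face is a hexagon and the patch is a disc the combinatorics is forced to agree with $\mathcal H$. Defects of the first kind are harmless: a fullerene has \emph{exactly twelve} pentagons, and each lies within distance $r+1$ of at most $c(r)=O(1)$ vertices (the radius-$(r+1)$ ball of a $3$-regular graph has bounded size), so at most $12\,c(r)=O(1)$ vertices are defective this way. The whole difficulty is the second kind: such a $v$ sits on a cycle of length $\le 2r$ that does not bound a flat hexagonal disc, i.e. on a ``thin neck'' of the map. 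Thus the conjecture becomes equivalent to: \emph{a uniformly random fullerene has, in expectation, only $o(n)$ vertices lying on a short separating cycle} — informally, a typical large fullerene has no thin neck, and in particular no long thin nanotube-like region, carrying a positive proportion of its vertices.

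Third, and this is the heart of the matter, I would control the shape of a typical fullerene via Thurston's correspondence. The planar dual of a fullerene is a triangulation of $S^2$ by unit equilateral triangles with exactly twelve vertices of degree $5$ (cone angle $5\pi/3$) and all others of degree $6$ (flat), hence a Euclidean cone metric on $S^2$ with twelve cone points of curvature $\pi/3$; Thurston identifies the set of such metrics of a given area with the lattice points of bounded norm in a Lorentzian cone, which is what produces the $\Theta(n^{9})$ enumeration. I would transport the uniform measure on fullerenes to this lattice-point model and prove that a typical lattice point yields a ``round'' metric: with probability $\to1$ the twelve cone points are pairwise at distance $\gtrsim\sqrt n$, and the shortest simple closed geodesic avoiding the cone points and separating them into two nonempty groups has length $\to\infty$; since only $o(n)$ vertices lie within distance $r$ of the union of all closed curves of length $\le 2r$, this eliminates the second kind of defect. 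This is the analogue, in Thurston's arithmetic complex-hyperbolic setting, of the fact that a Masur--Veech-random translation surface has negligible thin part, and I would hope to prove it either by a volume estimate showing that ``necked'' configurations number $o(n^{9})$ — via a surgery that cuts along a short waist curve and returns two configurations of strictly smaller area — or by equidistribution of lattice points in the orbifold $\mathbb{CH}^{9}/\Gamma$ parametrising these metrics.

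I expect Step three to be the genuine obstacle, which is precisely why the statement is still only a conjecture: Thurston's parametrisation pins down the \emph{cardinality} of the set of fullerenes superbly, but extracting quantitative geometry of a \emph{uniformly chosen} one — ruling out thin necks and elongated tubular parts that a positive fraction of vertices could inhabit — seems to require an equidistribution input for $\mathbb{CH}^{9}/\Gamma$, or a sufficiently clever combinatorial surgery, that has not, to our knowledge, been carried through; note also that the usual random-planar-map technology does not apply off the shelf here, since fullerenes have bounded degree with curvature concentrated at twelve points rather than the unbounded degrees of uniform random triangulations. Two minor points can be dispatched along the way: ``uniform on isomorphism classes'' versus ``uniform weighted by $1/|\mathrm{Aut}|$'' does not affect the limit, because all but an $o(1)$ fraction of fullerenes are asymmetric; and the candidate limit is admissible since $\mathcal H$ rooted at any vertex is unimodular, consistent with the general theory recalled above.
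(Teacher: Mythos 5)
There is a genuine gap, and you have named it yourself: Step three is not a proof but a program, and the statement you are asked to establish is in fact an \emph{open conjecture} in the paper (Conjecture \ref{conjecture}); Section \ref{sec:conjecture} is devoted precisely to explaining why no proof is currently available. Your reduction in Steps one and two is sound and closely parallels the paper's own discussion: the paper passes to the dual f-triangulations, observes via unimodularity and the mean-degree-$6$ lemma that any weak limit must be an infinite $6$-regular triangulation, and reduces the conjecture to showing that no weak limit has more than one topological end --- which is exactly your ``no thin neck carrying a positive proportion of vertices'' statement, formalized there as the unproven bound \eqref{target} on the number of f-triangulations containing a short separating cycle with two large complementary components. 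So up to the primal/dual translation you have correctly isolated the same bottleneck.

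Where you diverge is in the proposed attack on that bottleneck, and this is where the argument stops being a proof. Your claim that ``with probability $\to 1$ the twelve cone points are pairwise at distance $\gtrsim\sqrt n$ and the shortest separating geodesic has length $\to\infty$'' is precisely the missing content, not a consequence of anything you establish; Thurston's parametrization gives the order $\Theta(m^9)$ of the count but, as the paper notes, no usable estimate for the number of ``necked'' configurations (their $A(k,l,p)$, the triangulations with one non-triangular face and prescribed defect count) can be extracted from it, and the equidistribution statement for lattice points in $\mathbb{CH}^9/\Gamma$ that would substitute for such an estimate is not in the literature. The paper instead sketches a combinatorial surgery bound in the spirit of Angel--Schramm, which founders on the same enumeration problem. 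Your route via flat cone metrics and a Masur--Veech analogy is a genuinely different (and interesting) angle on the obstacle, but neither route is carried through, so the conjecture remains open; your write-up should be presented as a reduction plus a research proposal, not as a proof.
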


We will comment more thoroughly on this question in Section \ref{sec:conjecture}. 
Conjecture \ref{conjecture} has noticeable consequences on the behavior of the eigenvalues of typical fullerenes. 
Indeed, the convergence of a sequence of finite graphs $(G_n)$ towards a random rooted graph $(G,o)$ implies the convergence of the empirical spectral distribution $\varrho_{G_n}$ towards a limiting probability measure $\varrho$, see \cite{abert2013benjamini} or \cite{bordenave2016spectrum}. 
This measure $\varrho$ can be defined directly from the law $(G,o)$. 
The general construction is functional-analytic in essence. 
Under mild assumptions such as uniform boundedness of the degrees in $G$, the measure $\varrho$ is the unique Borel measure on an interval $I\subset \R$ whose $k$-th moments are equal to $\E \mu_k(G)$, the expected number of closed walks at the root of $(G,o)$; more precisely, $\varrho$ is the unique probability measure whose Stieltjes transform is given by
\begin{equation}\label{def:rho}
    \int_I \frac{1}{z-x}\varrho(dx) 
    = 
    \sum_{k=0}^\infty \frac{\E \mu_k(G)}{z^{k+1}},
    \quad z\in \C\setminus I. 
\end{equation}
The measure $\varrho$ is called \emph{the spectral density} of (the law of) the random graph $(G,o)$; it is also known by the name of \emph{averaged density of states} in the physics community.  
For the hexagonal lattice $\mathcal{H}$, the construction given in \eqref{def:DOS} coincides with \eqref{def:rho}, since in this case the number of closed paths at the root is nonrandom. 

The spectral density of $\mathcal{H}$ will be systematically denoted by $\varrho_{\mathcal{H}}$. 
Similarly, the spectral density of the dual lattice $\mathcal{T}^*$ will be denoted by $\varrho_{\mathcal{T}^*}$. 

The preceding discussion together with Conjecture \ref{conjecture} directly implies that the ESD of large random fullerenes converges towards the spectral density of $\mathcal{H}$. 

\begin{consequence}[1]
    Let $\varrho_{\mathcal{H}}$ be the spectral density of $\mathcal{H}$, uniquely defined by \eqref{def:rho}.  
    The empirical spectral distribution of random fullerenes with $n$ vertices converges weakly towards $\varrho_{\mathcal{H}}$ as $n\to\infty$. 
\end{consequence}
Our goal in the sequel will be to thoroughly study the spectral density $\varrho_{\mathcal{H}}$ and its various representations. 
We will start by a careful examination of the sequence $\mu_k(\mathcal{H})$. 

\subsection{Paths on the planar hexagonal lattice }
\label{random walks on the hexagonal lattice and in the plane}
Let us discuss the computation of the number of paths on $\mathcal{H}$ between two vertices with a given number of steps $k$. 
Without loss of generality, let the starting point be the origin and the endpoint a vertex $v\in V(\mathcal{H})$. 
Finally, set $v=(0,0)$ to get the number of closed paths of length $k$. 
Clearly, it is sufficient to consider even lengths $2k$ with arbitrary $k\in\N_0$ only. 
In the sequel, we follow \cite{kasteleyn1967graph} in our exposition to prove the following explicit expression for the numbers $\mu_{2k}(\mathcal{H})$. 
\begin{theorem}[\!\!\cite{Cotfas00}, Theorem 3]\label{theorem:cotfas_integral}
    For $k\in \N_0$ it holds
    \begin{align}
        \mu_{2k}\left( \mathcal{H} \right)
        =
        \frac{1}{(2\pi)^3} 
        \int_{0}^{2\pi} \int_{0}^{2\pi}\int_{0}^{2\pi} 
        \left| e^{i \varphi_1}+e^{i \varphi_2}+e^{i \varphi_3} \right|^{2k}  
        \diff \varphi_1 \diff \varphi_2 \diff \varphi_3. \label{eq:closed_walks_integral}
    \end{align}
\end{theorem}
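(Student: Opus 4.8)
To prove Theorem~\ref{theorem:cotfas_integral} the plan is to evaluate the combinatorial quantity $\mu_{2k}(\mathcal{H})$ in closed form as a sum of squared multinomial coefficients, and then to recognise that same sum as the average of $|e^{i\varphi_1}+e^{i\varphi_2}+e^{i\varphi_3}|^{2k}$ when $(\varphi_1,\varphi_2,\varphi_3)$ is uniform on the torus $[0,2\pi)^3$.

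\emph{Step 1: combinatorial reduction.} Since $\mathcal{H}$ is vertex-transitive, $\mu_{2k}(\mathcal{H})$ does not depend on the chosen root; fix it to be a vertex of type $c=0$. As $\mathcal{H}$ is bipartite with parts $\{c=0\}$ and $\{c=1\}$, a closed walk of length $2k$ performs, in alternation, $k$ steps of type $(c=0)\to(c=1)$ and $k$ steps of type $(c=1)\to(c=0)$. Let $a_1,a_2,a_3\in\R^2$ be the displacement vectors from a $c=0$ vertex to its three neighbours; with the embedding of Definition~\ref{def:H_T_N_N*} one checks that $a_1=(0,1)^{\intercal}$, $a_2=(\tfrac{\sqrt3}{2},-\tfrac12)^{\intercal}$, $a_3=(-\tfrac{\sqrt3}{2},-\tfrac12)^{\intercal}$, whence $a_1+a_2+a_3=0$ while any two of them are linearly independent. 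Labelling the forward steps by colours $c_1,\dots,c_k\in\{1,2,3\}$ and the backward steps by $d_1,\dots,d_k\in\{1,2,3\}$ in their order of appearance, the walk returns to its root if and only if $\sum_{i=1}^k a_{c_i}=\sum_{i=1}^k a_{d_i}$, so
\begin{align*}
\mu_{2k}(\mathcal{H})=\#\Bigl\{(\mathbf c,\mathbf d)\in\{1,2,3\}^k\times\{1,2,3\}^k ~:~ \textstyle\sum_{i}a_{c_i}=\sum_i a_{d_i}\Bigr\}.
\end{align*}

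\emph{Step 2: the key identity.} Put $m_j=\#\{i:c_i=j\}$ and $n_j=\#\{i:d_i=j\}$, so $m_1+m_2+m_3=n_1+n_2+n_3=k$ and $\sum_i a_{c_i}-\sum_i a_{d_i}=\sum_{j=1}^3(m_j-n_j)a_j$. The only linear relation among $a_1,a_2,a_3$ is $a_1+a_2+a_3=0$, so this vector vanishes iff $(m_1-n_1,m_2-n_2,m_3-n_3)$ is an integer multiple of $(1,1,1)$; together with $\sum_j(m_j-n_j)=0$ this forces $m_j=n_j$ for all $j$. Counting ordered sequences with prescribed colour multiplicities now gives
\begin{align}\label{eq:mult-sum}
\mu_{2k}(\mathcal{H})=\sum_{\substack{m_1,m_2,m_3\ge 0\\ m_1+m_2+m_3=k}}\binom{k}{m_1,m_2,m_3}^{2}.
\end{align}

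\emph{Step 3: matching the integral.} By the multinomial theorem,
\begin{align*}
\bigl|e^{i\varphi_1}+e^{i\varphi_2}+e^{i\varphi_3}\bigr|^{2k}
&=\sum_{m_1+m_2+m_3=k}\;\sum_{n_1+n_2+n_3=k}\binom{k}{m_1,m_2,m_3}\binom{k}{n_1,n_2,n_3}\,e^{i\sum_{j=1}^3(m_j-n_j)\varphi_j},
\end{align*}
and by orthonormality of the characters $(\varphi_1,\varphi_2,\varphi_3)\mapsto e^{i\sum_j\ell_j\varphi_j}$, $\ell\in\Z^3$, integrating over $[0,2\pi)^3$ leaves only the terms with $m_j=n_j$ for all $j$. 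Dividing by $(2\pi)^3$ then reproduces the right-hand side of \eqref{eq:mult-sum}, which proves \eqref{eq:closed_walks_integral}.

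I expect Step~1 to need the most care: one must argue root-independence, set up the walk/colour-sequence bijection precisely, and---most delicately---verify that $a_1+a_2+a_3=0$ is the \emph{only} linear relation among the bond vectors, since it is exactly this fact that turns ``zero total displacement'' into ``equal colour multiplicities''. A less computational alternative bypasses this point: Bloch-diagonalising the adjacency operator of $\mathcal{H}$ over its triangular Bravais lattice writes it as a direct integral of the $2\times2$ matrices $\left(\begin{smallmatrix}0 & 1+e^{i\theta_1}+e^{i\theta_2}\\ 1+e^{-i\theta_1}+e^{-i\theta_2} & 0\end{smallmatrix}\right)$, whose square equals $|1+e^{i\theta_1}+e^{i\theta_2}|^{2}$ times the identity; this gives $\mu_{2k}(\mathcal{H})=(2\pi)^{-2}\iint_{[0,2\pi)^2}|1+e^{i\theta_1}+e^{i\theta_2}|^{2k}\,\diff\theta_1\diff\theta_2$, and one recovers \eqref{eq:closed_walks_integral} by inserting a redundant angle $\varphi_1$, substituting $\theta_1=\varphi_2-\varphi_1$, $\theta_2=\varphi_3-\varphi_1$, and using translation invariance of the integrand.
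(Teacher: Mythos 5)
Your proposal is correct and follows essentially the same route as the paper's proof (after Kasteleyn): encode a closed walk of length $2k$ as $k$ forward and $k$ backward direction choices, reduce to the constant term of $\bigl((x_1+x_2+x_3)(x_1^{-1}+x_2^{-1}+x_3^{-1})\bigr)^k$, and extract it by substituting $x_j=e^{i\varphi_j}$ and using orthogonality of characters. Your Step~2 merely makes explicit a point the paper asserts without proof --- that the only linear relation $a_1+a_2+a_3=0$ among the bond vectors, combined with $\sum_j(m_j-n_j)=0$, forces equal colour multiplicities --- which is a welcome sharpening rather than a departure.
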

\begin{proof}
    Every path on $\mathcal{H}$ can be described as a sequence of the three direction vectors
    \begin{align*}
    x_1=\left( 0,1 \right)^\intercal, \quad x_2=\left( \frac{1}{2},-\frac{\sqrt{3}}{2} \right)^\intercal, \quad x_3=\left( -\frac{1}{2},-\frac{\sqrt{3}}{2} \right)^\intercal,
    \end{align*}
    and their inverse vectors
    \begin{align*}
        x_1^{-1} = (0,-1)^\intercal, \quad
        x_2^{-1} = \left( -\frac{1}{2},\frac{\sqrt{3}}
                   {2}\right)^\intercal, \quad 
        x_3^{-1} = \left( \frac{1}{2},\frac{\sqrt{3}}
                   {2}\right)^\intercal,
    \end{align*}
    compare Figure \ref{fig:T_H}. 
    Thus, a path of length $2k$ starting at the origin and ending in $v$ can be written as a sequence 
    \begin{align}
        x_{i_1}x_{j_1}^{-1}x_{i_2}x_{j_2}^{-1}\ldots 
        x_{i_k}x_{j_k}^{-1} 
        \label{sequence_of_directions}
    \end{align}
    for $i_1,j_1,\ldots,i_k,j_k\in\lbrace 1,2,3 \rbrace$.
    Note that every odd step on $\mathcal{H}$ has to be $x_1,x_2$ or $x_3$. 
    On the other hand, every even step has to be one of the inverse directions $x_1^{-1},x_2^{-1}$ or $x_3^{-1}$.
    Hence, the total number of directions $x_1,x_2,x_3$ in such sequence has to be equal to the total number of inverse directions $x_1^{-1},x_2^{-1},x_3^{-1}$.
    Moreover, the specific order of directions is not important for the calculation of the number of paths.
    Hence, in (\ref{sequence_of_directions}) we can rearrange the elements, cancel directions with their inverses, and write the remaining amount of steps in direction $x_j$ for $j = 1,2,3$ as its power. 
    For every destination vertex $v$, one gets a unique minimal amount of directions one has to go, i.e., the powers $k_1,k_2,k_3\in\Z$ in the shortened form $x_1^{k_1}x_2^{k_2}x_3^{k_3}$ of (\ref{sequence_of_directions}) are uniquely determined by $v$. 
    If $v=(0,0)$ it follows immediately that $k_1=k_2=k_3=0$.
    
    One can get the number of paths terminating in $v$ as the coefficient of the term $x_1^{k_1}x_2^{k_2}x_3^{k_3}$ in the multinomial expansion of
    \begin{align}
        \left(\left(x_1+x_2+x_3\right)
        \left(x_1^{-1}+x_2^{-1}+x_3^{-1}\right)\right)^{k},
        \label{sequence_of_directions2}
    \end{align}
    or,  equivalently, as the constant term in the multinomial expansion of
    \begin{align*}
        \left(\left(x_1+x_2+x_3\right) \left(x_1^{-1}+x_2^{-1}+x_3^{-1}\right)\right)^{k} 
        x_1^{-k_1}x_2^{-k_2}x_3^{-k_3}.
    \end{align*}
    To get this constant term, replace $x_j$ by complex number $e^{i\varphi_j}$, $j=1,2,3$, and integrate the resulting expression over all possible $\varphi_1,\varphi_2,\varphi_3\in[0,2\pi]$. 
    This step is reasonable since $x_1,x_2,x_3$ and their inverses are unit vectors in $\R^2$ and thus can be interpreted as unit complex numbers.
    In other words, we transform Cartesian coordinates into polar coordinates.
    
    Now, the amount of paths of length $2k$ from the origin to $v$ is given by
    \begin{align*}
        &\frac{1}{(2\pi)^3} 
        \int_{0}^{2\pi} \int_{0}^{2\pi} \int_{0}^{2\pi} 
        \left(\left(e^{i\varphi_1} + e^{i\varphi_2} + e^{i\varphi_3}\right)
        \left(e^{-i\varphi_1} + e^{-i\varphi_2} + e^{-i\varphi_3}\right) \right)^{k} 
        e^{-i(k_1\varphi_1 + k_2\varphi_2 + k_3\varphi_3)} 
        \diff \varphi_1\diff \varphi_2 \diff \varphi_3.
    \end{align*}
    Notice that
    $ \left(\left(e^{i\varphi_1} + e^{i\varphi_2} + e^{i\varphi_3}\right)
    \left(e^{-i\varphi_1} + e^{-i\varphi_2} + e^{-i\varphi_3}\right) \right) 
    = 
    \left| e^{i\varphi_1} + e^{i\varphi_2} + e^{i\varphi_3} \right|^{2}
    $
     and set $k_1=k_2=k_3=0$ for the number of closed paths of length $2k$. 
\end{proof}

\subsection{Special functions}
Before going further, we briefly need to recall some definition and well-known results for the (modified) Bessel functions  used in the proofs of our next results.
For integers $n\in\Z$, the \textit{Bessel function of the first kind of order} $n$ can be defined by the series
\begin{align*}
    J_n(x) 
    := 
    \sum_{k=0}^\infty \frac{(-1)^k}{k! (k+n)!}\left(\frac{x}{2}\right)^{2k+n},\quad x\in\C.
\end{align*}
 The \textit{modified Bessel function of the first kind of order} $n$ is  closely related to $J_n$ by
\begin{align*}
    I_n(x) 
    := 
    i^{-n}J_n(ix)
    = 
    \sum_{k=0}^\infty \frac{1}{k! (k+n)!}\left(\frac{x}{2}\right)^{2k+n},\quad x\in\C. 
\end{align*}
Results given in \cite{abra} imply the following useful relation: 
\begin{align}
    2 \frac{\diff I_n(x)}{\diff x} 
    = 
    I_{n-1}(x) + I_{n+1}(x), 
    \quad x\in\C,\quad n\in\Z. \label{eq:derivative}
\end{align}
Another important property of the modified Bessel function is the following recursive formula, which is a direct consequence of \cite{abra}: 
\begin{align*}
    I_{n}(x) 
    =
    I_{n+2}(x)+ \frac{2(n+1)}{x} I_{n+1}(x), 
    \quad x\in\C,\quad n\in\Z.
\end{align*}
We will occasionally need the {\it Gaussian hypergeometric function}
${}_2F_1$ which is defined as 
\begin{align*}
    {}_2F_1\left(
    \begin{matrix}
        a_1,a_2\\
        b
    \end{matrix}
    ;x\right)
    :=
    \sum_{j=0}^\infty 
    \frac{a_1^{(j)} a_2^{(j)} x^j}{b^{(j)} j!}, 
    \quad x\in\R,
\end{align*} 
for  $a_1,a_2,b\in\C$
where $a^{(n)}$ is the \textit{rising factorial}, i.e., 
\begin{align*}
    a^{(n)}
    := 
    \prod_{j=0}^{n-1} \left( a+j\right).
\end{align*}

\subsection{Properties of the spectral density of $\mathcal{H}$}
In equation \eqref{eq:closed_walks_integral}, the sum $ e^{i \varphi_1}+e^{i \varphi_2}+e^{i \varphi_3}$ can be seen as the position $W_3$ of a planar random flight $W$ after three steps. 
This provides a surprising connection between the moments of the spectral density $\varrho_{\mathcal{H}}$ and the position of a certain random walk in two dimensions. 
We define the \emph{planar random flight} as an isotropic \textit{random walk} $W=\{W_t, \; t\in \N_0\}$ on $\R^2=\C$ in discrete time starting from the origin ($W_0=(0,0)$) such that $W_{t+1}=W_t+e^{i U_t}$, where the random angles $U_t\sim U[0,2\pi]$, $t\in\N$ form an i.i.d.\ sequence. 

This interpretation yields that (\ref{eq:closed_walks_integral}) is nothing but the $(2k)^{\text{th}}$ moment of $X:=|W_3|$, and in particular the spectral density $\varrho_{\mathcal{H}}$ and the probability distribution of $X$ are identical, providing a very surprising representation for $\varrho_{\mathcal{H}}$ that will be further exploited in our main results. 
From now on, we gather some results on the density of $X$.

Borwein et al. presented in \cite[Theorem 2.1]{borwein15} a formula for the density of $|W_t|$ in case of a spatial random flight $W$ in $\R^d$ for arbitrary dimension $d\ge 2$ and any number of steps $t\in\N$. 
Let us formulate their result in our specific case $d=2$, $t=3$:
\begin{theorem}
    The random variable $X$ is absolutely continuously distributed with density
    \begin{align*}
        f_{X}(x)
        =
        \int_0^{\infty} tx J_0(tx) J_0^3(t) \diff t,
        \quad x\ge 0. 
    \end{align*}
\end{theorem}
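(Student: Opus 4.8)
The plan is to derive the density of $X=|W_3|$ by a Hankel-transform (radial Fourier) argument, which is the natural route given the rotational invariance of the planar random flight. First I would observe that $W_3 = e^{iU_1}+e^{iU_2}+e^{iU_3}$ is a sum of three i.i.d. unit vectors with uniformly random directions, so its characteristic function (two-dimensional Fourier transform) factorizes: for $\xi\in\R^2$ with $|\xi|=t$, one has $\E[e^{i\langle \xi, W_3\rangle}] = \big(\E[e^{i\langle\xi, e^{iU_1}\rangle}]\big)^3 = J_0(t)^3$, using the standard identity $\frac{1}{2\pi}\int_0^{2\pi} e^{it\cos\theta}\diff\theta = J_0(t)$. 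In particular the law of $W_3$ is rotationally invariant, so it has a density $g$ on $\R^2$ depending only on the radius.

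Next I would invert the Fourier transform in polar coordinates. Writing the density of $W_3$ at a point of modulus $x$ as $g(x)$ and using the radial inversion formula (Hankel transform of order zero), $g(x) = \frac{1}{2\pi}\int_0^\infty t\, J_0(tx)\, J_0(t)^3 \diff t$. The density of the modulus $X$ is then obtained from the area element: $f_X(x) = 2\pi x\, g(x) = x\int_0^\infty t\, J_0(tx)\, J_0(t)^3 \diff t$, which is exactly \eqref{formula:density_distance}. To make this rigorous I would need to justify the inversion, i.e. that $J_0(t)^3$ is integrable enough against $J_0(tx)$; since $J_0(t) = O(t^{-1/2})$ as $t\to\infty$, the integrand $t\,J_0(tx)J_0(t)^3$ decays like $t^{1-1/2-3/2} = t^{-1}$ only conditionally, so the integral must be understood as an improper (oscillatory) integral, or one regularizes with a Gaussian factor $e^{-\varepsilon t^2}$ and lets $\varepsilon\downarrow 0$. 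Absolute continuity of $X$ itself follows because $W_3$ is the convolution of three arc-length measures on the circle; the convolution of two such measures already has a bounded density on $\R^2$ (it is supported on the annulus $\{0\le|z|\le 2\}$ with an explicit elliptic-integral density), and convolving once more only improves regularity, so $g\in L^1\cap L^\infty$ and the Hankel inversion is valid pointwise where $g$ is continuous.

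Alternatively, and perhaps more in the spirit of the excerpt, I would deduce the statement directly from Theorem~\ref{theorem:cotfas_integral} together with the moment formula $\E X^{2k} = \mu_{2k}(\mathcal H)$: one checks that the claimed density reproduces all even moments and that $X$ has the same support, then invokes a moment-determinacy argument ($X$ is bounded by $3$, hence its distribution is determined by its moments) to conclude that \eqref{formula:density_distance} is indeed the density of $X$. This requires computing $\int_0^\infty \big(\int_0^3 x^{2k+1} J_0(tx)\diff x\big) t J_0(t)^3 \diff t$ and recognizing it, after interchanging integrals and using the Weber--Schafheitlin type evaluation of $\int_0^\infty x^{2k+1} J_0(tx)\diff x$ in a distributional sense, as the triple integral $(2\pi)^{-3}\int_{[0,2\pi]^3} |e^{i\varphi_1}+e^{i\varphi_2}+e^{i\varphi_3}|^{2k}\diff\varphi$.

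The main obstacle is the conditional (non-absolute) convergence of the Hankel integral in \eqref{formula:density_distance}: a clean proof must either (i) carry the $e^{-\varepsilon t^2}$ regularization through and justify the limit by dominated convergence away from the singular radii of $g$, or (ii) quote the general result of Borwein et al. \cite[Theorem 2.1]{borwein15}, whose proof already handles exactly this oscillatory-integral subtlety for arbitrary $d$ and $t$; specializing $d=2$, $t=3$ then gives the claim immediately. The remaining points — rotational invariance, the factorization into $J_0(t)^3$, and the area-element factor $2\pi x$ — are routine.
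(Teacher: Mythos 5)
Your argument is essentially the standard Kluyver-type derivation that underlies the cited result: the paper itself offers no proof of this theorem, but simply specializes \cite[Theorem 2.1]{borwein15} to $d=2$, $t=3$, and that reference proves it exactly by the route you describe (factorization of the planar characteristic function into $J_0(t)^3$ and Hankel inversion of order zero). So your proposal is correct in substance and coincides with the approach of the quoted source; your second, moment-based route is a workable alternative but is not what either the paper or the reference does. One sub-claim in your regularity discussion is inaccurate: the convolution of two arc-length measures on the unit circle does \emph{not} have a bounded density on $\R^2$ — the radial density of $|W_2|$ is $\frac{2}{\pi\sqrt{4-x^2}}$ on $[0,2]$, so the planar density blows up at $|z|=0$ and $|z|=2$, and after the third convolution the limit density still has a logarithmic singularity at $x=1$ (the paper notes this for $f_H$ at $\pm1$), so $g\in L^1$ but $g\notin L^\infty$. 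This does not damage your conclusion — absolute continuity follows already from the fact that the two-step law is absolutely continuous and convolution with any probability measure preserves absolute continuity, and the pointwise Hankel inversion only needs to be justified away from the singular radii — but the "bounded, hence inversion is valid pointwise" shortcut should be replaced by the $e^{-\varepsilon t^2}$ regularization you mention or by the argument in \cite{borwein15}.
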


Next we analyze the behavior of $f_{X}$ on the positive real half line.

\begin{proposition}\label{pro:density_X_support}
    It holds
    \begin{align*}
        f_{X}(x) =
        \begin{cases}
            \frac{2\sqrt{3}x}{\pi(3+x^2)}{}_2F_1\left( 
            \begin{matrix} 
                \frac{1}{3},\frac{2}{3} \\
                1
            \end{matrix};
            \frac{x^2(9-x^2)^2}{(3+x^2)^3} \right),\quad &\text{ if }x\in [0,3],\\
            0,&\text{ if }x>3,
        \end{cases}
    \end{align*}
    where ${}_2F_1$ is the Gaussian hypergeometric function.
\end{proposition}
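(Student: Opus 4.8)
The plan is to evaluate the oscillatory integral $f_X(x)=\int_0^\infty t x\, J_0(tx) J_0^3(t)\,\diff t$ by first re-deriving it as a three-dimensional angular integral and then reducing it to a one-dimensional integral that matches a known Watson-type formula for ${}_2F_1$ at special arguments. Starting from Theorem \ref{theorem:cotfas_integral}, I would write the moments of $X$ as $\E X^{2k}=(2\pi)^{-3}\int_{[0,2\pi]^3}|e^{i\varphi_1}+e^{i\varphi_2}+e^{i\varphi_3}|^{2k}\diff\varphi$, and note that the quantity $R:=|e^{i\varphi_1}+e^{i\varphi_2}+e^{i\varphi_3}|$ has exactly the distribution of $X$ when the $\varphi_j$ are i.i.d.\ uniform on $[0,2\pi]$. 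Hence the density of $X$ is the density of $R$. By rotation invariance one of the three angles can be integrated out (it only contributes an overall rotation), so $X\eqd|1+e^{i\psi_1}+e^{i\psi_2}|$ with $\psi_1,\psi_2$ i.i.d.\ uniform; equivalently $X\eqd|1+Z|$ where $Z$ is the endpoint of a two-step planar random flight. This makes the support claim $\supp f_X=[0,3]$ immediate (triangle inequality), and reduces the problem to computing the density of $|1+Z|$.

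Next I would compute the law of $|1+Z|$ explicitly. Writing $e^{i\psi_1}+e^{i\psi_2}=2\cos\!\big(\tfrac{\psi_1-\psi_2}{2}\big)e^{i(\psi_1+\psi_2)/2}$, the modulus $S:=|e^{i\psi_1}+e^{i\psi_2}|=2|\cos\theta|$ with $\theta$ uniform on an interval, so $S$ has the arcsine-type density $\tfrac{2}{\pi}(4-s^2)^{-1/2}$ on $[0,2]$, and the phase is independent and uniform. Therefore $X^2=|1+S e^{i\Phi}|^2=1+S^2+2S\cos\Phi$ with $\Phi$ uniform on $[0,2\pi]$ independent of $S$. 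I would then write the density of $X$ as a double integral over $s$ and $\phi$, use the change of variables to $x$, and carry out the $\phi$-integration, which produces a complete elliptic integral (equivalently a ${}_2F_1$ with parameters $\tfrac12,\tfrac12$); the remaining $s$-integral against the arcsine density is a standard but delicate hypergeometric reduction. An alternative, possibly cleaner route is to go back to $\int_0^\infty txJ_0(tx)J_0^3(t)\diff t$ and use the known closed form for the Bessel moment / the Laplace transform of the three-step density from \cite{borwein15}; Borwein et al.\ already record that the three-step planar density is expressible through $K(k)$ and in turn through ${}_2F_1\big(\tfrac13,\tfrac23;1;\cdot\big)$ via a cubic (Landen-type) transformation of the elliptic modulus. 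The appearance of the exponents $\tfrac13,\tfrac23$ rather than $\tfrac12,\tfrac12$ is exactly the signature of such a cubic transformation, and the argument $\tfrac{x^2(9-x^2)^2}{(3+x^2)^3}$ is the corresponding rational modulus.

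Concretely, the key steps in order: (1) identify $f_X$ with the density of $|1+e^{i\psi_1}+e^{i\psi_2}|$ and deduce $\supp f_X=[0,3]$; (2) derive the density of $S=|e^{i\psi_1}+e^{i\psi_2}|$ and the conditional density of $X$ given $S$; (3) integrate out the phase to get an elliptic-integral expression $f_X(x)=\tfrac{1}{\pi^2}\int \frac{\diff s}{\sqrt{4-s^2}\sqrt{\cdots}}$-type formula, or equivalently invoke the Borwein et al.\ elliptic formula directly; (4) apply the cubic transformation of the hypergeometric/elliptic function together with Euler's transformation ${}_2F_1\big(\tfrac13,\tfrac23;1;w\big)$ identities to bring it into the stated rational-argument form with prefactor $\tfrac{2\sqrt3 x}{\pi(3+x^2)}$; (5) verify the normalization $\int_0^3 f_X=1$ and, as a sanity check, that the $(2k)$-th moment reproduces $\mu_{2k}(\mathcal H)$ for small $k$. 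The main obstacle is step (4): matching the elliptic-integral representation to the precise ${}_2F_1\big(\tfrac13,\tfrac23;1;\cdot\big)$ with that exact argument requires invoking the right cubic modular transformation (the ramified degree-3 correspondence on the modular curve) and tracking the algebraic prefactors through it — this is where one must be careful that the argument stays in $[0,1]$ for all $x\in[0,3]$ (note $9-x^2$ changes sign structure at the endpoints but enters squared) and that the correct branch of ${}_2F_1$ is selected. I would handle it by checking the identity as formal power series in $x$ near $0$ (both sides are even analytic functions of $x$ there) and then extending by analytic continuation on $[0,3]$, which sidesteps most of the elliptic-function bookkeeping.
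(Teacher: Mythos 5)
The paper does not actually derive this formula: it takes the expression on $[0,3]$ directly from \cite[Example 2]{borwein2012densities} and disposes of the case $x>3$ by citing Watson's classical evaluation \cite[p.~413]{watson66} of $\int_0^\infty t\,J_0(tx)J_0^3(t)\,\diff t$ when $x$ exceeds the sum of the other parameters (your triangle-inequality/support argument is an equally valid and more probabilistic way to see that part). Your proposal instead tries to reconstruct the Borwein--Straub--Wan--Zudilin derivation from scratch, and steps (1)--(3) — conditioning on the two-step modulus $S$ with arcsine density on $[0,2]$ and integrating out the phase to reach an elliptic-integral form — are sound and standard.

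The genuine gap is exactly where you locate it, in step (4)/(5), and your proposed workaround does not close it. First, ``checking the identity as formal power series in $x$ near $0$'' is not available in the naive sense: expanding $J_0(tx)$ in powers of $x$ inside $\int_0^\infty tx\,J_0(tx)J_0^3(t)\,\diff t$ produces moment integrals $\int_0^\infty t^{2j+1}J_0^3(t)\,\diff t$ whose integrands grow like $t^{2j-1/2}$ times an oscillation, so they diverge for $j\ge 1$; one needs the Mellin-transform/Nicholson-type machinery of \cite{borwein2012densities} (or the moment data of \cite{borwein15}) to make sense of the local expansion, which is a substantial part of the real proof. Second, ``extending by analytic continuation on $[0,3]$'' fails at $x=1$: the rational argument $x^2(9-x^2)^2/(3+x^2)^3$ equals $1$ there, ${}_2F_1\bigl(\tfrac13,\tfrac23;1;\cdot\bigr)$ has a logarithmic branch point at $1$ (this is precisely the logarithmic singularity of $f_X$ at $x=1$ noted in the paper), and the germ of the right-hand side at $x=0$ does not continue through $x=1$ to the principal branch on $(1,3]$. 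That the \emph{same} principal-branch ${}_2F_1$ formula holds on both sides of $x=1$ is the nontrivial content of the cubic modular transformation in \cite{borwein2012densities}, and it must be established, not inherited from continuation. So your outline is a reasonable map of the terrain, but as written it defers the only hard step to an argument that does not work; the paper avoids the issue entirely by citation.
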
 
The above formula for $x\in[0,3]$ was derived in \cite{borwein2012densities}. 
See also \cite[Theorem 4.5]{borwein15}.
If $x>3$ it follows
\begin{align*}
    \int_0^\infty t J_0(tx)J_0(t)^3 \diff t
    =
    0
\end{align*} 
by \cite{watson66} where we set $a_1=x, a_2=a_3=1$ and $\nu=0$. 
The fact that the support of $f_X$ lies in the interval $[0,3]$ is very intuitive since $X$ is a distance $0\le |W_3|\le 3$.
The even moments of $X$ were derived in \cite{borwein15}: 
\begin{proposition}\label{prop:moments_X}
    For $k\in\N_0$, it holds
    \begin{align*}
        \E X^{2k} 
        = 
        \sum_{k_1+k_2+k_3=k}\binom{k}{k_1,k_2,k_3}^2.
    \end{align*}
\end{proposition}

As an immediate consequence, the number of closed walks on graphene $\mathcal{H}$ and the triangular lattice $\mathcal{T}^*$ can be obtained if we notice that 
$\mu_k\left(\mathcal{T}^*\right) =\mu_{2k}\left(\mathcal{H}\right)$ 
and 
$\mu_{2k-1}\left(\mathcal{H}\right)=0$ for any $k\in\N$. 
The latter relation holds since any path on $\mathcal{H}$ with an odd length $k\in\N$ ends in a vertex with parameter $c=1$ in \eqref{eq:V(h)} and thus cannot be closed.
\begin{proposition}\label{prop:number of closed walks}
    The number of closed paths of length $k$ at the root of $\mathcal{H}$ are given by 
    \begin{align} \label{eq:mu_H}
        \mu_k\left(\mathcal{H}\right) &= 
        \begin{cases}
            \sum\limits_{k_1+k_2+k_3=k/2}\binom{k/2}{k_1,k_2,k_3}^2\quad &\text{if }k \text{ is even,} \\
            0 &\text{if }k \text{ is odd}.
        \end{cases} 
    \end{align}
    Similarly, the number of closed paths of length $k$ at the root of $\mathcal{T}^*$ are given by 
    \begin{align}
        \mu_k\left(\mathcal{T}^*\right) 
        &= 
        \sum\limits_{k_1+k_2+k_3=k}\binom{k}{k_1,k_2,k_3}^2. \label{eq:mu_T*}
    \end{align}
\end{proposition}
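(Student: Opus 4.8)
The plan is to deduce the statement directly from Theorem~\ref{theorem:cotfas_integral} and Proposition~\ref{prop:moments_X}, handling the odd lengths via the bipartiteness of $\mathcal{H}$ and the lattice $\mathcal{T}^{*}$ via the loop-bijection announced after Definition~\ref{def:H_T_N_N*}. Concretely, I would first dispose of the odd lengths. The partition of $V(\mathcal{H})$ in \eqref{eq:V(h)} according to $c\in\{0,1\}$ exhibits $\mathcal{H}$ as a bipartite graph, every edge joining a vertex with $c=0$ to a vertex with $c=1$. Since $\mathcal{H}$ is vertex-transitive, the root may be assumed to lie in the class $\{c=0\}$; then any walk of odd length ends in the opposite class and so cannot be closed, giving $\mu_{2k-1}(\mathcal{H})=0$, which is the second line of \eqref{eq:mu_H}.

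Next, for even length $2k$, Theorem~\ref{theorem:cotfas_integral} writes $\mu_{2k}(\mathcal{H})$ as $(2\pi)^{-3}$ times the triple integral of $\left|e^{i\varphi_1}+e^{i\varphi_2}+e^{i\varphi_3}\right|^{2k}$. By the planar-random-flight interpretation this quantity is exactly $\E X^{2k}$ for $X=|W_3|$, so Proposition~\ref{prop:moments_X} immediately yields $\mu_{2k}(\mathcal{H})=\sum_{k_1+k_2+k_3=k}\binom{k}{k_1,k_2,k_3}^2$; substituting the even integer $k$ for $2k$ gives the first line of \eqref{eq:mu_H}. If one prefers a self-contained computation, one can instead expand $\left|z\right|^{2k}=z^k\overline{z}^{\,k}$ with $z=\sum_j e^{i\varphi_j}$ by the multinomial theorem and integrate term by term, the orthogonality of the characters $\varphi\mapsto e^{im\varphi}$ on $[0,2\pi]$ annihilating every off-diagonal term and leaving precisely $\sum_{k_1+k_2+k_3=k}\binom{k}{k_1,k_2,k_3}^2$.

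Finally, for $\mathcal{T}^{*}$ I would invoke the loop-bijection. Grouping the steps of a closed $2k$-walk on $\mathcal{H}$ rooted in $\{c=0\}$ into $k$ consecutive pairs, each pair enters one of the three adjacent vertices with $c=1$ and then leaves to one of its three neighbours with $c=0$; the resulting displacement on $\mathcal{T}$ is either zero (three pairs realise this, recorded as a single loop of weight $3$) or one of the six $\mathcal{T}$-neighbours. This is exactly the bijection mentioned after Definition~\ref{def:H_T_N_N*}, so $\mu_k(\mathcal{T}^{*})=\mu_{2k}(\mathcal{H})$ and \eqref{eq:mu_T*} follows from the even-length case. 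Alternatively, one may rerun the argument of Theorem~\ref{theorem:cotfas_integral} for $\mathcal{T}^{*}$ directly: its walk-counting symbol at $(\varphi_1,\varphi_2,\varphi_3)$ is $3+\sum_{j\neq\ell}e^{i(\varphi_j-\varphi_\ell)}=\left|e^{i\varphi_1}+e^{i\varphi_2}+e^{i\varphi_3}\right|^{2}$, whence $\mu_k(\mathcal{T}^{*})$ equals the same triple integral with exponent $2k$ and the multinomial expansion again gives \eqref{eq:mu_T*}.

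All the calculations involved are routine once the ingredients above are assembled; the only step that needs genuine care is the loop-bijection between closed walks on $\mathcal{H}$ and on $\mathcal{T}^{*}$ in the last paragraph — in particular the correct accounting of the weight-$3$ loop and the reduction, via vertex-transitivity, to a root in the sublattice $\{c=0\}$. I expect this to be the only real (and still mild) obstacle.
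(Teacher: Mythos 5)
Your proposal is correct and follows essentially the same route as the paper: odd lengths vanish by the bipartition of $V(\mathcal{H})$ according to $c\in\{0,1\}$, even lengths come from Theorem~\ref{theorem:cotfas_integral} combined with Proposition~\ref{prop:moments_X}, and the $\mathcal{T}^*$ case reduces to $\mu_k(\mathcal{T}^*)=\mu_{2k}(\mathcal{H})$ via the loop-bijection stated after Definition~\ref{def:H_T_N_N*}. The paper treats the proposition as an immediate consequence of exactly these three ingredients, so your (somewhat more detailed) write-up matches its proof.
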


\begin{remark}
    For even values of $k$, formula \eqref{eq:mu_H} can be derived in either of the following two ways:
    \begin{enumerate}[(a)]
        \item By applying \cite{DiCre} and using Vandermonde's identity \cite{abra}, along with setting all transition probabilities of the random walk on $\mathcal{H}$ in \cite{DiCre} to $1/3$. 
        \item By employing combinatorial arguments and the multinomial theorem to Formulae \eqref{sequence_of_directions} and \eqref{sequence_of_directions2}, we obtain
        \begin{align*}
            &\left(\left(x_1+x_2+x_3\right)
            \left(x_1^{-1}+x_2^{-1}+x_3^{-1}\right)\right)^{k} \\
            &=
            \left( \sum_{k_1+k_2+k_3=k}\binom{k}{k_1,k_2,k_3} x_1^{k_1}x_1^{k_3}x_3^{k_3} \right)
            \left( \sum_{l_1+l_2+l_3=k}\binom{k}{l_1,l_2,l_3} x_1^{-l_1}x_2^{-l_2}x_3^{-l_3} \right)\\
            &= \sum_{\substack{k_1+k_2+k_3=k\\ l_1+l_2+l_3=k}} \binom{k}{k_1,k_2,k_3}\binom{k}{l_1,l_2,l_3} x_1^{k_1-l_1}x_2^{k_2-l_2}x_3^{k_3-l_3}.
        \end{align*}
        Since each direction \( x_j \) and its inverse \( x_j^{-1} \) must occur in the path with the same frequency, it follows that \( k_j = l_j \) for \( j=1,2,3 \). 
    \end{enumerate}
\end{remark}

\section{Main results}\label{sec:main results}
Here we formulate our main results:

\subsection{Distribution of random eigenvalues of graphene $\mathcal{H}$ and its dual graph $\mathcal{T}^*$}
\label{sec:Main.subsec:DistrEigenvTH}

\begin{proposition}\label{proposition:densities_for_sequences}
The probability distribution $\varrho_{\mathcal{H}}$ is absolutely continuous with respect to the Lebesgue measure, and has a density given by
\begin{align*}
    f_H(x)
    &=
    \frac{1}{2}\int_0^\infty t|x| J_0(|x|t)J_0^3(t)\diff t,
    \quad x\in\R,\\ 
    &=
    \begin{cases}
        \frac{\sqrt{3}|x|}{\pi \left(3+x^2\right)}{}_2F_1\left(
        \begin{matrix}
            \frac{1}{3},\frac{2}{3} \\
            1
        \end{matrix}; 
        \frac{x^2\left(9-x^2\right)^2}{\left(3+x^2\right)^3} \right),\quad &\text{ if }x\in [-3,3],\\
        0, &\text{otherwise.}
    \end{cases}
\end{align*}
Similarly, the density of the probability law $\varrho_{\mathcal{T}^*}$ is given by
\begin{align*}
    f_T(x)
    &=
    \frac{1}{2}\int_0^\infty t J_0(t\sqrt{x})J_0^3(t)\diff t, 
    \quad x>0,\\
    &=\begin{cases}
        \frac{\sqrt{3}}{\pi (3+x)}{}_2F_1 \left(
        \begin{matrix}
            \frac{1}{3},\frac{2}{3}\\
            1
        \end{matrix}; 
        \frac{x\left(9-x\right)^2}{\left(3+x\right)^3} \right),\quad &\text{ if } x\in [0,9],\\
        0,&\text{otherwise.}
    \end{cases}
\end{align*}
\end{proposition}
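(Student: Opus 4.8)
The plan is to derive the two density formulas from the material already set up in the excerpt, in two stages: first a change-of-variables argument that connects $\varrho_{\mathcal{H}}$ and $\varrho_{\mathcal{T}^*}$ to the law of $X=|W_3|$, and then an identification of the resulting integral with the closed-form ${}_2F_1$ expressions via Proposition \ref{pro:density_X_support}.

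First I would treat the hexagonal case. By Proposition \ref{prop:number of closed walks} the odd moments of $\varrho_{\mathcal{H}}$ vanish and the even moments satisfy $\mu_{2k}(\mathcal{H}) = \E X^{2k}$ (Proposition \ref{prop:moments_X}), so $\varrho_{\mathcal{H}}$ is the symmetric distribution on $[-3,3]$ whose $2k$-th moment equals the $2k$-th moment of $X$. Since a compactly supported measure is determined by its moments, $\varrho_{\mathcal{H}}$ is exactly the law of $\varepsilon X$, where $\varepsilon$ is an independent symmetric $\pm1$ sign; equivalently $|L_H|\eqd X$ with a symmetric sign. Hence $f_H(x) = \tfrac12 f_X(|x|)$ for all $x\in\R$, and plugging in the integral representation \eqref{formula:density_distance} for $f_X$ gives the first line $f_H(x)=\tfrac12\int_0^\infty t|x|J_0(|x|t)J_0^3(t)\diff t$, while Proposition \ref{pro:density_X_support} gives the closed form on $[-3,3]$ and $0$ elsewhere. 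I would make sure to justify the moment-determinacy step cleanly: both measures are supported in $[-3,3]$ (for $\varrho_{\mathcal{H}}$ this follows since the adjacency operator of $\mathcal{H}$ has norm $\le 3$, or simply from $\mu_{2k}(\mathcal{H})\le 3^{2k}$), so Carleman's condition or the Stone--Weierstrass argument applies.

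For the triangular case, recall from the text that $\mu_k(\mathcal{T}^*) = \mu_{2k}(\mathcal{H}) = \E X^{2k} = \E (X^2)^k$. Therefore $\varrho_{\mathcal{T}^*}$ is the law of $Y := X^2$, supported on $[0,9]$. A density transform then gives, for $x>0$, $f_T(x) = f_X(\sqrt{x})\cdot \tfrac{1}{2\sqrt{x}}$. Substituting $f_X(\sqrt x)=\int_0^\infty t\sqrt x\, J_0(\sqrt x\, t)J_0^3(t)\diff t$ from \eqref{formula:density_distance} yields $f_T(x) = \tfrac12\int_0^\infty t\, J_0(t\sqrt x)J_0^3(t)\diff t$, the first claimed line. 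For the closed form, I substitute $x\mapsto\sqrt x$ (equivalently $x^2\mapsto x$) into the $x\in[0,3]$ branch of Proposition \ref{pro:density_X_support}: the prefactor $\tfrac{2\sqrt3\sqrt x}{\pi(3+x)}$ combined with the Jacobian $\tfrac{1}{2\sqrt x}$ collapses to $\tfrac{\sqrt3}{\pi(3+x)}$, and the ${}_2F_1$ argument $\tfrac{x^2(9-x^2)^2}{(3+x^2)^3}$ at $x\mapsto\sqrt x$ becomes $\tfrac{x(9-x)^2}{(3+x)^3}$, matching the statement; the support $x\in[0,3]$ for $X$ turns into $x\in[0,9]$ for $Y$.

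The only genuinely delicate point is the moment-to-distribution identification — i.e., asserting that equality of all moments of two compactly supported measures forces equality of the measures, and being careful that $\varrho_{\mathcal{H}}$ really is supported on $[-3,3]$ (so the problem is moment-determinate) rather than on some larger set. Everything else is bookkeeping: the elementary density transformations $Y=X^2$ and $L_H=\varepsilon X$, and an algebraic simplification of the rational prefactor and the hypergeometric argument under the substitution $x\mapsto\sqrt x$. I would also remark that absolute continuity of $\varrho_{\mathcal H}$ and $\varrho_{\mathcal T^*}$ is inherited directly from the absolute continuity of $X$ established in the cited Borwein et al. theorem, so no separate argument is needed there.
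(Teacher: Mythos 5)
Your proposal is correct and follows essentially the same route as the paper: identify the moments of $\varrho_{\mathcal H}$ with those of the symmetrized law of $X$ and the moments of $\varrho_{\mathcal T^*}$ with those of $X^2$, invoke moment determinacy on the compact supports $[-3,3]$ and $[0,9]$, and transfer the integral and ${}_2F_1$ formulas from Proposition \ref{pro:density_X_support} by the substitution $x\mapsto\sqrt{x}$. Your explicit attention to the determinacy step and to why the support bound holds is slightly more careful than the paper's citation of Kallenberg, but the argument is the same.
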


\begin{figure}[h] 
    \begin{center}
    \begin{minipage}{0.49\textwidth}
        \centering
        \includegraphics[scale=0.5]{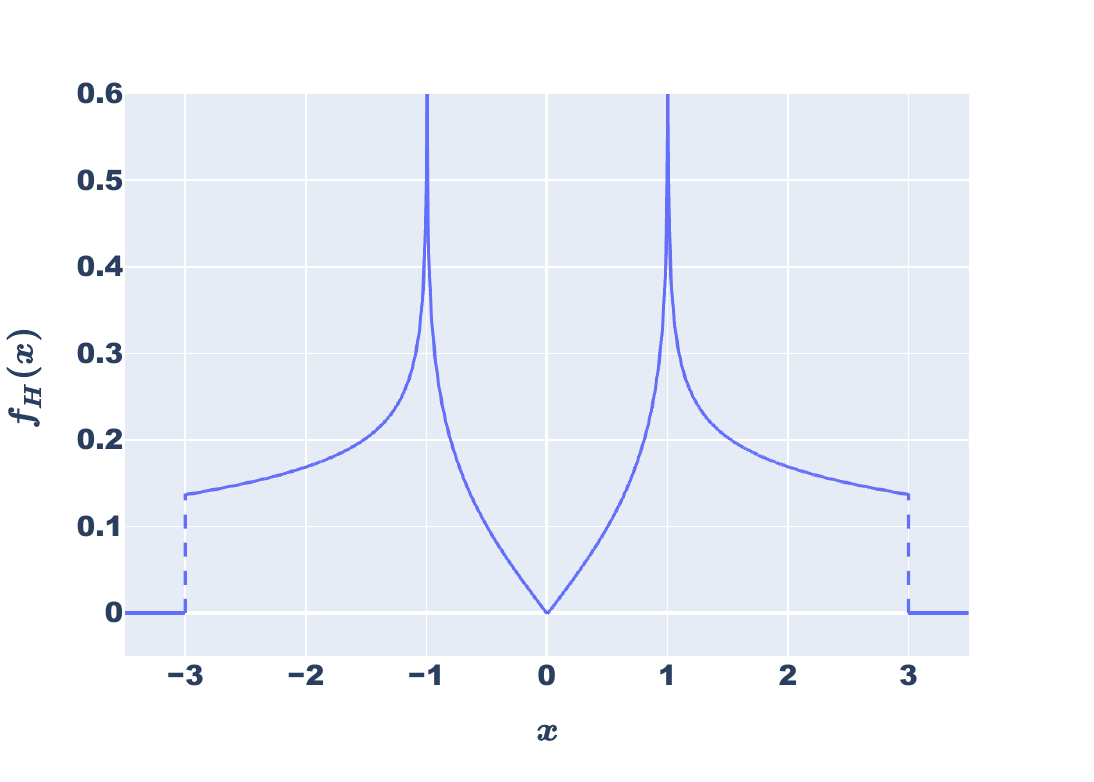}
    \end{minipage}\hfill
    \begin{minipage}{0.49\textwidth}
        \centering
        \includegraphics[scale=0.5]{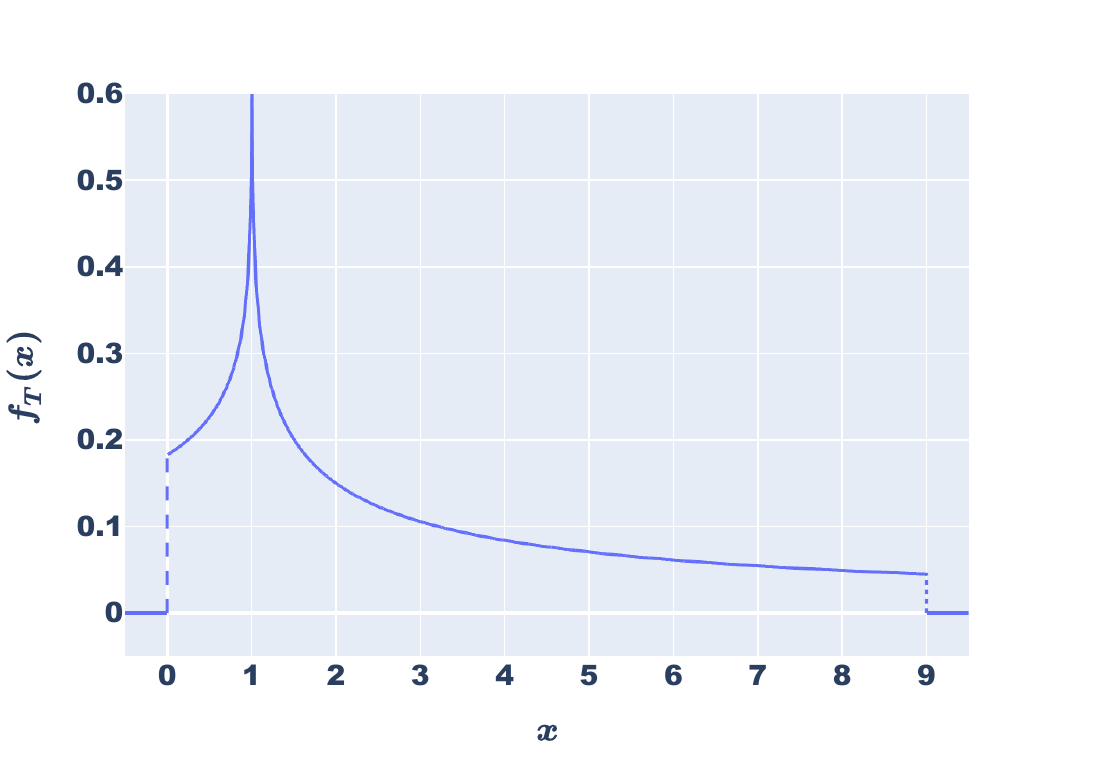}
    \end{minipage}
    \end{center}
      \caption{Density functions $f_H$ (left) and $f_T$ (right) of the random eigenvalue $H$, $T$ of the hexagonal lattice $\mathcal{H}$ and triangular lattice $\mathcal{T}^*$, respectively.}
      \label{fig:density_hexagonal_lattice}
\end{figure}

The density function $f_H$ has two logarithmic singularities in $x=\pm 1$, cf. \cite{borwein15}. 
By construction it follows that $f_T$ has a logarithmic singularity in $x=1$, as illustrated in Figure \ref{fig:density_hexagonal_lattice}. 

In the sequel, we will use $H$ and $T$ to denote random variables whose probability distribution is $\varrho_{\mathcal{H}}$ or $\varrho_{\mathcal{T}^*}$, or equivalently with densities $f_H$ or $f_T$. It can be immediately seen from the above densities that the following important relation holds:
\begin{equation}\label{eq:TdH}
    H\eqd A \sqrt{T},
\end{equation}
where $A$ is a Rademacher distributed random variable taking values $\pm 1$ with probability $1/2$ each, and $\eqd$ means the equality of probability laws. $A$ and $T$ must be chosen stochastically independent from each other. The next result provides a handy integral representation for the characteristic functions of $H$ and $T$.
\begin{proposition} \label{prop:charfctHT}
    For the characteristic functions $\varphi_H(s)=\E e^{isH}$ and $\varphi_T(s)=\E e^{isT}$, $s\in\R$, it holds
    \begin{align}
        \varphi_H(s) 
        &= 
        \frac{\diff}{\diff s} \left[ s \int_0^1 I_0^3\left( 2 is \sqrt{ t(1-t)} \right) \diff t \right], \label{eq:charFH}   \\
        \varphi_T(s) 
        &=
        \int_0^1 I_0^3\left( 2i\sqrt{is\log t} \right)\diff t\, . \label{eq:charFT}
    \end{align}
\end{proposition}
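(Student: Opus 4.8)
The plan is to derive both identities from a single algebraic fact about the moments of $T$ together with two elementary ``resolution of the identity'' formulas expressing $e^{isx}$ and $\sin(sy)/y$ as integrals of a single modified Bessel function $I_0$. The starting point is the multinomial formula $\E T^k = \mu_k(\mathcal{T}^*) = \sum_{k_1+k_2+k_3=k}\binom{k}{k_1,k_2,k_3}^2$ from Proposition \ref{prop:number of closed walks}. Since $\binom{k}{k_1,k_2,k_3}^2 = (k!)^2/(k_1!k_2!k_3!)^2$ and $I_0(2\sqrt z) = \sum_{k\ge 0} z^k/(k!)^2$ by \eqref{def:BesselI}, expanding $I_0$ in its power series and summing over $k_1,k_2,k_3$ shows that this is equivalent to the \emph{master identity}
\begin{align*}
\E\!\left[I_0\!\left(2\sqrt{zT}\right)\right] = I_0^3\!\left(2\sqrt z\right),\qquad z\in\C
\end{align*}
(note that $I_0$ is even, so $z\mapsto I_0(2\sqrt z)$ is an entire, single-valued function). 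Both formulas then follow by inserting the appropriate integral representation into this identity.

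For $\varphi_T$, I would first record the Laplace-type identity $e^{isx} = \int_0^\infty e^{-u} I_0(2\sqrt{isux})\,\diff u$ for $x\ge 0$, which drops out of $\int_0^\infty u^k e^{-u}\diff u = k!$ applied term by term in the series of $I_0$. Taking $x = T$, exchanging $\E$ with $\int_0^\infty\diff u$, and invoking the master identity with $z = isu$ gives $\varphi_T(s) = \E e^{isT} = \int_0^\infty e^{-u} I_0^3(2\sqrt{isu})\,\diff u$. The substitution $u = -\log t$ (so that $e^{-u}\diff u = -\diff t$ and the limits become $t=1,\,t=0$) turns this into $\int_0^1 I_0^3(2\sqrt{-is\log t})\,\diff t$, which is \eqref{eq:charFT} because $I_0$ is even and $(2i\sqrt{is\log t})^2 = (2\sqrt{-is\log t})^2$.

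For $\varphi_H$, I would use the relation $H\eqd A\sqrt T$ from \eqref{eq:TdH} and the independence of $A$ and $T$ to rewrite the claim as $\varphi_H(s) = \E\cos(s\sqrt T)$. The integral representation needed here is $\sin(sy)/y = s\int_0^1 I_0(2is\sqrt{y^2 t(1-t)})\,\diff t$ for $y\ge 0$, which follows from the Beta integral $\int_0^1 [t(1-t)]^k\diff t = (k!)^2/(2k+1)!$ applied term by term. Setting $y=\sqrt T$, taking expectations, and using the master identity with $z = -s^2 t(1-t)$ yields
\begin{align*}
F(s) := s\int_0^1 I_0^3\!\left(2is\sqrt{t(1-t)}\right)\diff t = \E\!\left[\frac{\sin(s\sqrt T)}{\sqrt T}\right].
\end{align*}
Since $\partial_s\big(\sin(s\sqrt T)/\sqrt T\big) = \cos(s\sqrt T)$ is bounded by $1$, differentiation under the expectation is justified and gives $F'(s) = \E\cos(s\sqrt T) = \varphi_H(s)$, which is precisely \eqref{eq:charFH}. (If one prefers, one may skip the probabilistic reading of $F$ and simply match the Taylor coefficients of $s$ on the two sides of \eqref{eq:charFH}, using exactly the same Gamma and Beta integrals.)

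The only genuine work is justifying the interchanges of summation, expectation and integration, and I expect this bookkeeping --- though routine --- to be the main obstacle. The decisive simplification is that $T$ is almost surely bounded (its density is supported on $[0,9]$ by Proposition \ref{proposition:densities_for_sequences}), which makes every series in sight converge absolutely with room to spare: $\sum_k |z|^k\,\E T^k/(k!)^2 = \E[I_0(2\sqrt{|z|T})] < \infty$ and $\sum_k |sx|^k/k! = e^{|sx|} < \infty$. For the $u$-integral one uses the crude bound $|I_0(w)|\le I_0(|w|)$, so that $e^{-u}\,|I_0(2\sqrt{isuT})| \le e^{-u} I_0(6\sqrt{|s|u})$, which is integrable over $(0,\infty)$ and legitimises Fubini; the $t$-integration is over the compact interval $[0,1]$ and causes no trouble; and the differentiation under $\E$ in the last step is valid because $|\cos|\le 1$.
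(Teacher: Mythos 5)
Your proposal is correct and is essentially the paper's own proof in probabilistic clothing: your ``master identity'' $\E[I_0(2\sqrt{zT})]=I_0^3(2\sqrt z)$ is exactly Proposition \ref{prop:vladetta} read through the moments $a_k=\E T^k$, your Gamma-integral kernel plus the substitution $u=-\log t$ reproduces the paper's derivation of \eqref{eq:charFT} via Corollary \ref{prop:generating function}, and your Beta-integral representation of $\sin(s\sqrt T)/\sqrt T$ followed by differentiation in $s$ is the paper's manipulation $\frac{a_k}{(2k)!}=\frac{a_k}{(k!)^2}\cdot\frac{(k!)^2}{(2k+1)!}\cdot\frac{d}{dx}x^{2k+1}$ for \eqref{eq:charFH}. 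The Fubini and domination arguments you sketch (boundedness of $T$ on $[0,9]$, $|I_0(w)|\le I_0(|w|)$) adequately cover the interchanges that the paper performs without comment.
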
 
Calculating the derivative in \eqref{eq:charFH} and using property \eqref{eq:derivative} we get
\begin{equation*}
    \varphi_H(s)
    = 
    \int_0^1 I_0^3\left( 2 is \sqrt{ t(1-t)} \right) \diff t 
     + 6is \int_0^1 \sqrt{ t(1-t)} I_0^2 
     \left(2is \sqrt{t(1-t)} \right) I_1\left( 2 is \sqrt{t(1-t)} \right)
     \diff t  ,
\end{equation*}
since $I_{-1}(x)=I_{1}(x)$ for all $x\in\C$.

\subsection{Approximation in distribution $\boldsymbol{\varrho_{\mathcal{H}}}$ and $\boldsymbol{\varrho_{\mathcal{T}^*}}$}
\label{sec:Main.subsec:direct_simulation}
Consider the  random variable
\begin{align}\label{eq:Yb}
    Y_b
    &:=
    \cos(X_b)+\cos(\beta X_b)+\cos ((1+\beta) X_b),  
\end{align}
where $X_b\sim U[0,b]$, $b>0$, and $\beta>0$ is any irrational number.
Interestingly enough, the random variable $3+2Y_b$ approximates the random eigenvalue $T$ of the triangulation $\mathcal{T}^*$ with density function $f_T$ in distribution as $b\to \infty$. 
Inspired by the Wolfram blog \cite{trott18}, we formulate and prove the following
\begin{theorem}\label{theorem:approximation}
    Let $X_b\sim U([0,b])$ for some $b\in\R_+$. For a random variable $Y_b$ defined in \eqref{eq:Yb},
    it holds
    \begin{align} \label{eq:approxT}
        3+2Y_b 
        \overset{d}{\underset{b\rightarrow\infty}{\longrightarrow}}
        T,
    \end{align}
    \begin{align} \label{eq:approxH}
        A\sqrt{3+2Y_b} 
        \overset{d}{\underset{b\rightarrow\infty}{\longrightarrow}}
        H,
    \end{align}
    where $A$ is a Rademacher distributed random variable independent of $X_b$ and $\overset{d}{\longrightarrow}$ denotes convergence in distribution.
\end{theorem}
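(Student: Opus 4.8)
The plan is to prove the convergence in distribution by the method of moments: show that for every fixed $k\in\N_0$, the $k$-th moment of $3+2Y_b$ converges to $\E T^k = \mu_k(\mathcal{T}^*) = \sum_{k_1+k_2+k_3=k}\binom{k}{k_1,k_2,k_3}^2$ as $b\to\infty$, and then invoke the fact that $\varrho_{\mathcal{T}^*}$ is compactly supported (hence determined by its moments). First I would write $3+2Y_b = e^{iX_b}\cdot$ (nothing) — more precisely, observe that $3+2Y_b = |1+e^{iX_b}+e^{i\beta X_b}|^2$ when the three phases $0,X_b,\beta X_b$ are suitably arranged; indeed $|e^{i\theta_1}+e^{i\theta_2}+e^{i\theta_3}|^2 = 3 + 2\cos(\theta_1-\theta_2)+2\cos(\theta_2-\theta_3)+2\cos(\theta_1-\theta_3)$, so taking $\theta_1 = 0$, $\theta_2 = -X_b$, $\theta_3 = X_b\beta$ (or a similar choice) gives exactly $3 + 2\cos(X_b) + 2\cos((1+\beta)X_b) + 2\cos(\beta X_b) = 3+2Y_b$. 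Thus $(3+2Y_b)^k = |1+e^{iX_b}+e^{i\beta X_b}|^{2k}$, and its expectation is $\frac1b\int_0^b |1+e^{it}+e^{i\beta t}|^{2k}\diff t$.

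The heart of the argument is then an equidistribution / ergodic-averaging statement: as $b\to\infty$,
\begin{align*}
\frac1b\int_0^b \bigl|1+e^{it}+e^{i\beta t}\bigr|^{2k}\diff t \;\longrightarrow\; \frac{1}{(2\pi)^2}\int_0^{2\pi}\!\!\int_0^{2\pi} \bigl|1+e^{i\varphi_1}+e^{i\varphi_2}\bigr|^{2k}\diff\varphi_1\diff\varphi_2.
\end{align*}
This is precisely Weyl equidistribution of the line $\{(t,\beta t) \bmod 2\pi : t\ge 0\}$ on the torus $\R^2/(2\pi\Z)^2$, which holds because $1$ and $\beta$ are rationally independent (here $\beta$ irrational suffices since the relevant frequencies are $1,\beta$), applied to the continuous (indeed trigonometric-polynomial) test function $\varphi\mapsto|1+e^{i\varphi_1}+e^{i\varphi_2}|^{2k}$. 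For a trigonometric polynomial the convergence can even be checked by hand: expanding $|1+e^{i\varphi_1}+e^{i\varphi_2}|^{2k}$ as a finite sum of terms $e^{i(m\varphi_1+n\varphi_2)}$, the torus integral picks out only the $(m,n)=(0,0)$ term, while $\frac1b\int_0^b e^{i(m+n\beta)t}\diff t\to 0$ for $(m,n)\neq(0,0)$ since $m+n\beta\neq 0$; so only a careful bookkeeping of which term survives is needed. Finally, the surviving torus integral equals $\frac{1}{(2\pi)^2}\int\int|1+e^{i\varphi_1}+e^{i\varphi_2}|^{2k} = \mu_{2k}(\mathcal{H})\big/$(shift) — one more change of variables ($\varphi_1\mapsto\varphi_1-\varphi_3$, $\varphi_2\mapsto\varphi_2-\varphi_3$ and integrating out the free $\varphi_3$) recovers the triple-integral formula of Theorem \ref{theorem:cotfas_integral}, hence equals $\mu_{2k}(\mathcal{H}) = \mu_k(\mathcal{T}^*) = \E T^k$. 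This gives \eqref{eq:approxT}.

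For \eqref{eq:approxH}, since $A$ is $\pm1$ with equal probability and independent of $X_b$, the random variable $A\sqrt{3+2Y_b}$ has a symmetric distribution whose square is $3+2Y_b$; by the continuous mapping theorem and \eqref{eq:approxT} together with the relation $H\eqd A\sqrt T$ from \eqref{eq:TdH}, convergence in distribution transfers. Concretely, for bounded continuous test functions one conditions on $A$ and uses $\E g(A\sqrt{3+2Y_b}) = \tfrac12\E g(\sqrt{3+2Y_b}) + \tfrac12\E g(-\sqrt{3+2Y_b})$, each term converging by composing \eqref{eq:approxT} with the continuous maps $x\mapsto g(\pm\sqrt{x})$ on $[0,9]$.

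The main obstacle I expect is making the equidistribution step fully rigorous and self-contained: one must be careful that the test function, although a genuine trigonometric polynomial, involves the modulus raised to a power, so one should first expand $|1+e^{i\varphi_1}+e^{i\varphi_2}|^{2k} = \bigl((1+e^{i\varphi_1}+e^{i\varphi_2})(1+e^{-i\varphi_1}+e^{-i\varphi_2})\bigr)^k$ into its finite Fourier series and then apply the elementary bound $\bigl|\frac1b\int_0^b e^{i\lambda t}\diff t\bigr|\le \frac{2}{b|\lambda|}$ for $\lambda\neq0$ uniformly over the finitely many frequencies $\lambda = m+n\beta$ appearing — the only subtlety being that $\beta$ irrational guarantees $m+n\beta\neq 0$ for all $(m,n)\neq(0,0)$ in the (bounded) index range. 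A secondary point worth stating cleanly is the justification that moment convergence implies weak convergence here, which follows from $\varrho_{\mathcal{T}^*}$ being supported in $[0,9]$ (Proposition \ref{proposition:densities_for_sequences}) and hence moment-determinate. The alternative ergodic-theoretic proof mentioned in the text would replace Weyl's theorem by the pointwise ergodic theorem for the linear flow on the torus, but the moment route above is the most elementary.
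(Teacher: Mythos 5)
Your proof is correct, but it takes a genuinely different route from the paper's primary argument. The paper's official proof of Theorem \ref{theorem:approximation} is also by the method of moments (using, as you do, that all distributions involved are supported in $[0,9]$ and hence moment-determinate), but it proceeds by brute force: it expands $\E(3+2Y_b)^{2k}$ via the multinomial theorem and the product-to-sum formulas for powers of cosines, identifies which terms survive the averaging, assembles the resulting triple sums into exponential generating functions expressed through $\cosh$, $\sinh$ and $\sum_{n\in\Z}I_n^3$, and finally matches these with the generating function of $\mu_k(\mathcal{T}^*)$ by means of the nontrivial Bessel identity of Theorem \ref{theorem:Besselfunction}. Your route --- writing $3+2Y_b=\bigl|e^{i\theta_1}+e^{i\theta_2}+e^{i\theta_3}\bigr|^2$ with phases moving linearly on the torus, applying Weyl equidistribution to the trigonometric polynomial $\bigl|1+e^{i\varphi_1}+e^{i\varphi_2}\bigr|^{2k}$, and identifying the torus average with the triple integral of Theorem \ref{theorem:cotfas_integral} by rotation invariance --- bypasses both the combinatorics and the Bessel identity entirely. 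It is in effect a streamlined, moment-level version of the paper's \emph{second}, ``constructive'' proof (Propositions \ref{prop:1} and \ref{prop:3} together with the exact-simulation remark), which rests on the same equidistribution of $(X_b,\beta X_b)\bmod 2\pi$ but is phrased through weak convergence on the torus, the continuous mapping theorem, and a chain of distributional identities mod $2\pi$. What your shortcut gives up is the byproduct: the paper's primary proof is precisely what motivates and exploits Theorem \ref{theorem:Besselfunction}, which the authors present as a result of independent interest. Your treatment of \eqref{eq:approxH} via conditioning on $A$ and \eqref{eq:TdH} matches the paper's (continuous mapping). One small slip to repair: the displayed integrand $\bigl|1+e^{it}+e^{i\beta t}\bigr|^{2k}$ produces the cross term $\cos((1-\beta)t)$ rather than $\cos((1+\beta)t)$; you should use the phase choice you describe in words, e.g. $(0,\,t,\,(1+\beta)t)$ or $(0,\,-t,\,\beta t)$, which changes nothing in the equidistribution step since every nonzero frequency is still of the form $m+n\beta$ with $(m,n)\neq(0,0)$ and hence nonvanishing for irrational $\beta$.
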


To illustrate the above convergence, we set $\beta=\phi:=\frac{1+\sqrt{5}}{2}= 1.61805\ldots$ to be the \textit{golden ratio}.
Since $\phi^2-\phi-1=0$, it follows $1+\beta=\phi^2$.
Hence, we simulate
\begin{align*}
    Y_b
    =
    \cos(X_b) + \cos(\phi X_b) + \cos (\phi^2 X_b),  
\end{align*}
generating $10^5$ realizations of a uniformly distributed random variable $X_b$ on the interval $[0,b]$ for $b=1,10,10^2,10^5$.
Then we compare the empirical density function of $3+2Y_b$ with $f_T$ given in Proposition \ref{proposition:densities_for_sequences}. 
As one can see in Figure \ref{fig:density_phi}, the histograms show convergence to $f_T$ as $b$ goes to infinity. 
\begin{figure}[t]
    \centering
    \begin{minipage}{.5\textwidth}
        \centering
        \includegraphics[width=1\linewidth, height=0.2\textheight]{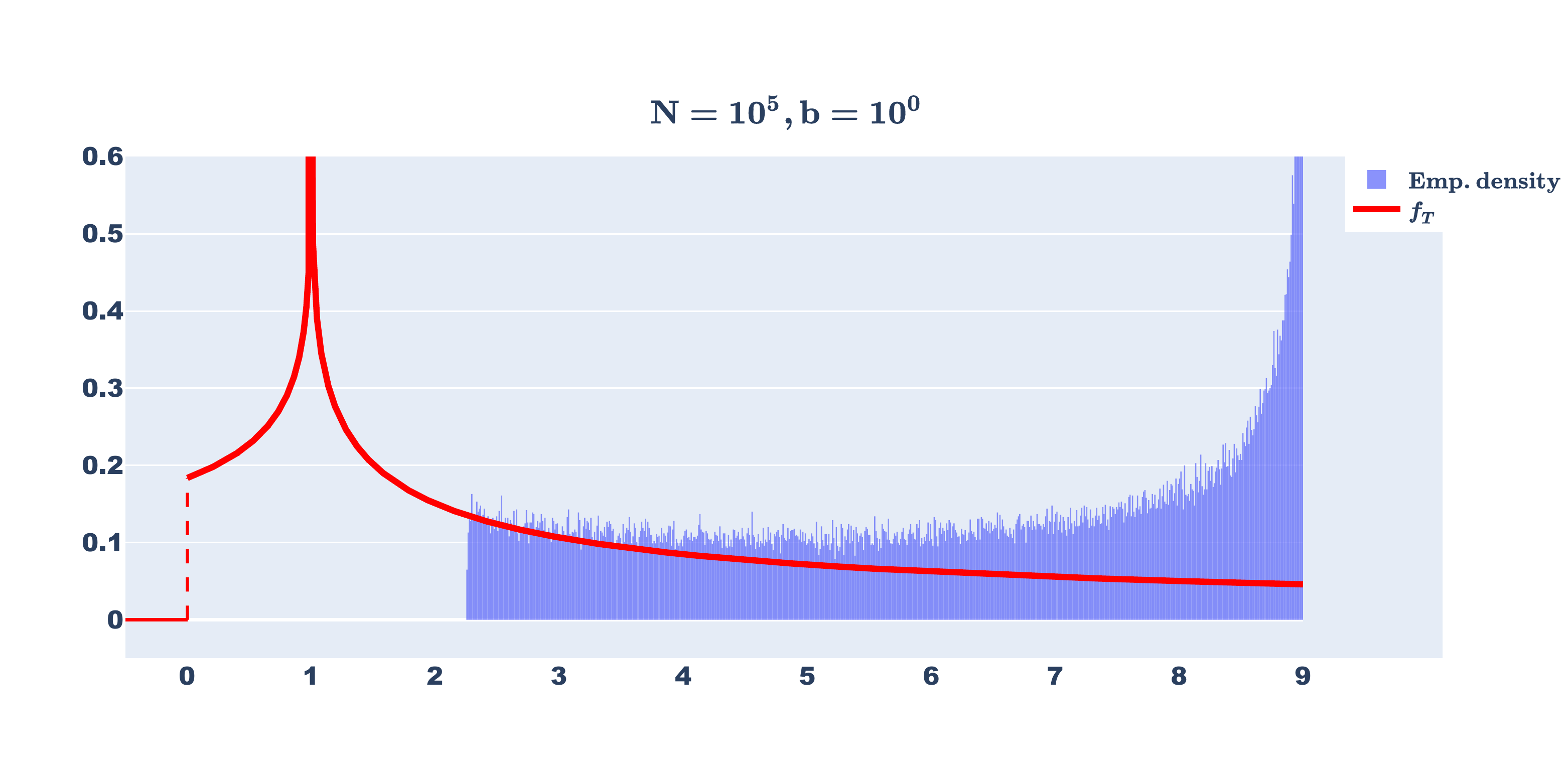}
    \end{minipage}%
    \begin{minipage}{0.5\textwidth}
        \centering
        \includegraphics[width=1\linewidth, height=0.2\textheight]{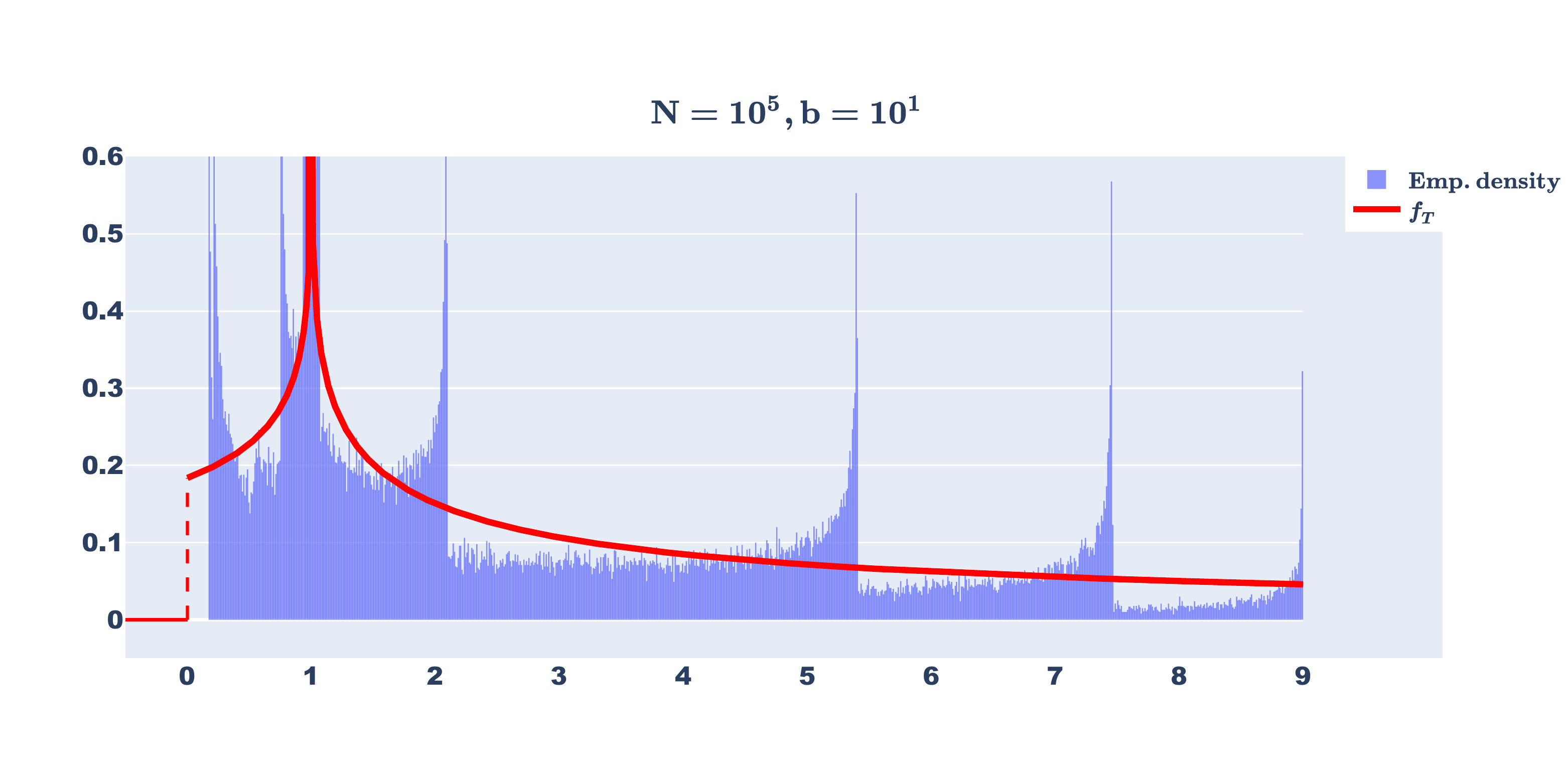}
    \end{minipage}
        \begin{minipage}{.5\textwidth}
        \centering
        \includegraphics[width=1\linewidth, height=0.2\textheight]{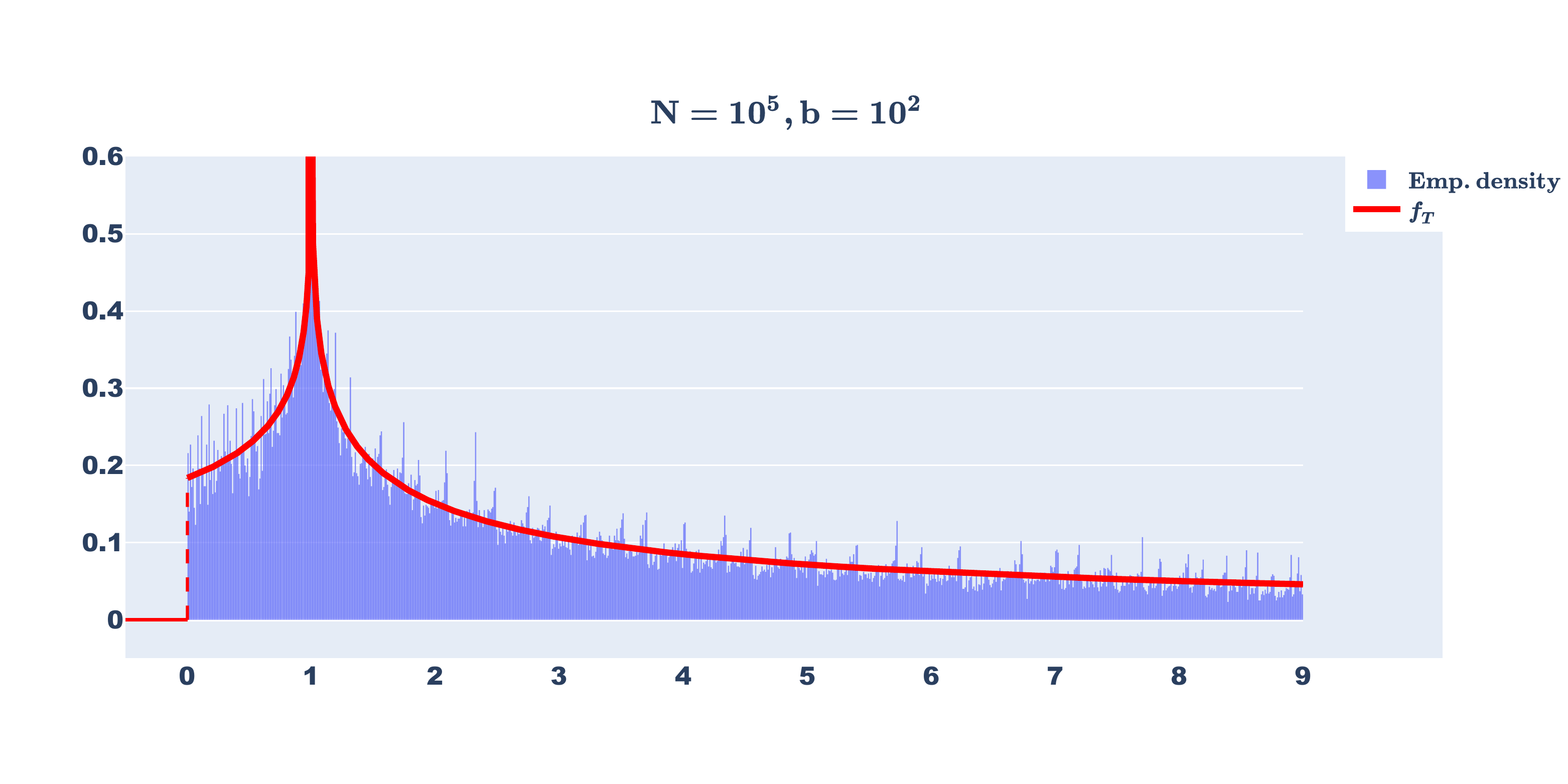}
    \end{minipage}%
    \begin{minipage}{0.5\textwidth}
        \centering
        \includegraphics[width=1\linewidth, height=0.2\textheight]{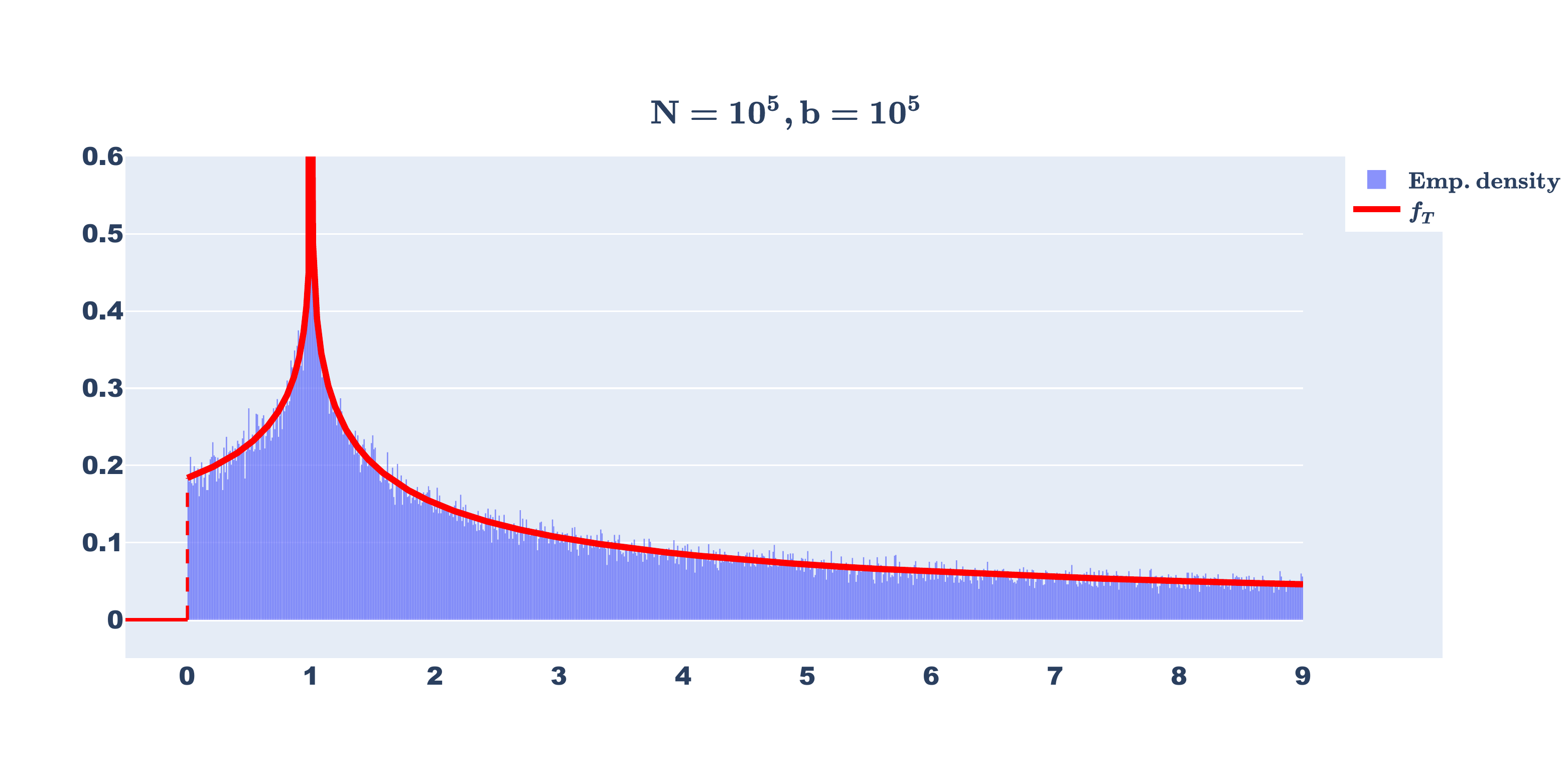}
    \end{minipage}
    \caption{Density $f_T$ (red line) and normalized histograms of simulated $3+2Y_b$ (blue bars) for $b= 1$ (upper left), $b=10$ (upper right), $b=10^2$ (lower left), $b=10^5$ (lower right) with sample size $10^5$ and $\beta=\phi$.} \label{fig:density_phi}
\end{figure}
However, there is an easy direct way of simulation of $T$ and $H$ without any approximations:
\begin{remark}[Exact simulation of $T$ and $H$]
    It holds 
    \begin{align}
        T\eqd 
        3+2\left( \cos (U_1-U_2)+\cos (U_1-U_3)+\cos(U_2-U_3) \right), 
        \label{eq:f_cos}
    \end{align}
    where $U_1,U_2,U_3\sim U[0,2\pi]$ are i.i.d.\ random variables.
    
    Indeed, Theorem \ref{theorem:cotfas_integral} implies that the moments of $T$ can be written as
    \begin{align*}
        \E T^k 
        = 
        \E \left( f(U_1,U_2,U_3)^{k} \right), 
        \quad k\in\N,
    \end{align*}
    where $f(x,y,z):=\left| e^{ix}+e^{iy}+e^{iz}\right|^2$. 
    Since random variables on the left- and right-hand side of \eqref{eq:f_cos} take values in the interval $[0,9]$, their probability laws are uniquely determined by their moment sequences. Hence, it follows $T\eqd f(U_1,U_2,U_3)$.
    Using Euler's identity, write
    \begin{align*}
        f(U_1,U_2,U_3)
        &=
        \left| e^{iU_1}+e^{iU_2}+e^{iU_3}\right|^2
        = 
        3+2\left( \cos (U_1-U_2)+\cos (U_1-U_3)+\cos(U_2-U_3) \right).
    \end{align*}
    As stated in Proposition \ref{prop:3} below, the distribution of $U_3$ can be even chosen arbitrarily, i.e., it need not be uniform.
    Distribution relation \eqref{eq:TdH} allows us to simulate $H$, once $T$ is simulated by means of formula \eqref{eq:f_cos}.
\end{remark}

\begin{remark}
    In 1880, Lord Rayleigh posed in \cite{Rayleigh1880} a problem about the distribution of a finite Fourier series 
    \begin{align*}
        \sum_{j=1}^N a_j \cos(\omega_j t +b_j X_j),
    \end{align*}
    where $X_j$ are random variables and $N, a_j,b_j,\omega_j, t$ are deterministic numbers.
    More than 100 years later, Blevins \cite{Blevins1997} derived a formula for the probability density of such series in case of independent phases $X_j$.
    The above remark shows the distribution of such Fourier series with dependent phases $X_j=U_k-U_l$, $\{j,k,l\}=\{1,2,3\}$, $U_1,U_2,U_3 \sim U[0,2\pi]$ i.i.d.\ and parameters
    \begin{align*}
        N
        &=
        3, 
        \quad a_j=b_j \equiv 1,
        \quad \omega_j \equiv 0, 
        \quad j=1,\ldots,3.
    \end{align*}
\end{remark}

Theorem \ref{theorem:approximation} is proven in Section \ref{sec:proofs.subsec:CharGenFct} by the method of moments using a nontrivial new identity about modified Bessel functions, cf. Theorem \ref{theorem:Besselfunction} below. However, this proof is, in a sense, non-constructive.
In what follows, we give an independent direct and constructive proof of Theorem \ref{theorem:approximation} which reveals connections to ergodic theory. 

Our goal will be to show 
\begin{proposition}
    \label{prop:3}
    Let $\beta>0$ be any irrational number, and let $U_1, U_2\sim {U}[0,2\pi]$ be independent random variables.
    Then
    \begin{align*}
     \cos(X_b) + \cos(\beta X_b) + \cos((1+\beta) X_b) 
     \xrightarrow[b\rightarrow\infty]{d}
     \cos(U_1-U_3) + \cos(U_3-U_2) + \cos(U_2-U_1),
    \end{align*}
    where $U_3$ is an arbitrary random variable independent of $U_1$ and $U_2$.
\end{proposition}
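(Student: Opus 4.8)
The plan is to reduce the statement to a classical equidistribution fact. First I would observe that the right-hand side depends on $U_1,U_2$ only through the point $(U_1 \bmod 2\pi, U_2 \bmod 2\pi)$, which is uniformly distributed on the torus $\mathbb{T}^2 = (\R/2\pi\Z)^2$; indeed, writing $g(\theta_1,\theta_2) := \cos\theta_1 + \cos\theta_2 + \cos(\theta_1-\theta_2)$ and substituting $\theta_1 = U_1-U_3$, $\theta_2 = U_2 - U_3$, the quantity $\cos(U_1-U_3)+\cos(U_3-U_2)+\cos(U_2-U_1)$ equals $g(\theta_1,\theta_2)$, and for \emph{any} fixed value of $U_3$ the pair $(\theta_1,\theta_2) = (U_1-U_3, U_2-U_3)$ is again uniform on $\mathbb{T}^2$ by translation invariance of the uniform (Haar) measure; conditioning on $U_3$ and averaging shows the law of the right-hand side does not depend on the distribution of $U_3$ and equals the law of $g(\Theta_1,\Theta_2)$ with $(\Theta_1,\Theta_2)$ uniform on $\mathbb{T}^2$. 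So it suffices to prove that $\cos(X_b)+\cos(\beta X_b)+\cos((1+\beta)X_b)$ converges in distribution to $g(\Theta_1,\Theta_2)$.

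Next I would rewrite the left-hand side as $h(X_b \bmod 2\pi, \beta X_b \bmod 2\pi)$ where $h(\phi_1,\phi_2) := \cos\phi_1 + \cos\phi_2 + \cos(\phi_1+\phi_2)$, using that $\cos((1+\beta)X_b) = \cos(X_b + \beta X_b)$ depends only on the two coordinates mod $2\pi$. Thus the left-hand side is $F(t X_b)$ for the linear flow $t \mapsto t\cdot(1,\beta)$ on $\mathbb{T}^2$ composed with the continuous function $F(\phi_1,\phi_2) = \cos\phi_1+\cos\phi_2+\cos(\phi_1+\phi_2)$, sampled at the uniform random time $X_b \in [0,b]$. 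Since $\beta$ is irrational, Weyl's equidistribution theorem (or unique ergodicity of the irrational linear flow on $\mathbb{T}^2$) gives that for every continuous $F$ on $\mathbb{T}^2$,
\[
\frac{1}{b}\int_0^b F(t, \beta t)\,\diff t \xrightarrow[b\to\infty]{} \frac{1}{(2\pi)^2}\int_{\mathbb{T}^2} F(\phi_1,\phi_2)\,\diff\phi_1\diff\phi_2.
\]
Applying this with $F$ replaced by $e^{i s F}$ for fixed $s\in\R$ (still continuous on $\mathbb{T}^2$), the left-hand side reads $\E\big[e^{isF(X_b,\beta X_b)}\big] = \frac1b\int_0^b e^{isF(t,\beta t)}\diff t$, which converges to $\frac{1}{(2\pi)^2}\int_{\mathbb{T}^2} e^{is F(\phi_1,\phi_2)}\diff\phi_1\diff\phi_2 = \E\big[e^{is h(\Theta_1,\Theta_2)}\big]$. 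A change of variables $(\phi_1,\phi_2)\mapsto(\phi_1,-\phi_2)$ (a measure-preserving bijection of $\mathbb{T}^2$) turns $h$ into $g$, so this limit equals $\E[e^{isg(\Theta_1,\Theta_2)}]$, the characteristic function of the right-hand side. Pointwise convergence of characteristic functions yields convergence in distribution, which is the claim.

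The main obstacle is making the equidistribution input rigorous in exactly the form needed: one must justify that sampling the flow at a \emph{random} time $X_b\sim U[0,b]$ (rather than taking a Cesàro time average) produces the stated weak limit. This is precisely the content of the displayed identity above once one notes $\E[F(X_b,\beta X_b)] = \frac1b\int_0^b F(t,\beta t)\diff t$, so the only real work is to cite/prove Weyl equidistribution for the continuous-time orbit $\{(t,\beta t) : t\ge 0\}$ on $\mathbb{T}^2$ — standard, e.g. via checking convergence on the trigonometric characters $(\phi_1,\phi_2)\mapsto e^{i(m\phi_1+n\phi_2)}$, where for $(m,n)\neq(0,0)$ the integral $\frac1b\int_0^b e^{i(m+n\beta)t}\diff t = O(1/b)$ since $m+n\beta\neq 0$ by irrationality of $\beta$, and then extending to all continuous $F$ by Stone–Weierstrass and a uniform approximation argument. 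A minor secondary point is the reduction handling the arbitrary independent $U_3$, which is dispatched by the translation-invariance/conditioning argument in the first paragraph.
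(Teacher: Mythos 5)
Your proposal is correct, and its skeleton coincides with the paper's: both arguments rest on the equidistribution of $(X_b,\beta X_b)\bmod 2\pi$ towards the uniform law on the torus (Proposition \ref{prop:1}), followed by a push-forward through the continuous map $(\phi_1,\phi_2)\mapsto\cos\phi_1+\cos\phi_2+\cos(\phi_1+\phi_2)$ and the same translation-invariance/conditioning argument to show that the law of the right-hand side does not depend on $U_3$. Where you genuinely diverge is in how the equidistribution input is established and then exploited. The paper proves Proposition \ref{prop:1} by the method of moments, expanding $\E[\cos^j(X_b)\sin^k(X_b)\cos^l(\beta X_b)\sin^m(\beta X_b)]$ via the power-reduction formula \eqref{formula:cos^n} and matching the limits against the mixed moments of $(\cos U_1,\sin U_1,\cos U_2,\sin U_2)$, then passes the resulting weak convergence of the torus-valued vector through the continuous mapping theorem and an explicit chain of $\bmod\ 2\pi$ distributional identities. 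You instead test directly against the characters $e^{i(m\phi_1+n\phi_2)}$, where the whole computation collapses to $\frac1b\int_0^b e^{i(m+n\beta)t}\diff t=O(1/b)$ for $(m,n)\neq(0,0)$ (since $m+n\beta\neq 0$ by irrationality), extend to all continuous test functions by Stone--Weierstrass, and apply the result to $e^{isF}$ to get pointwise convergence of characteristic functions of the scalar quantity. Your character computation is the same cancellation that drives the paper's cases \eqref{formula:case1}--\eqref{formula:case2}, but packaged more economically; what the paper's heavier moment computation buys is that it is reused almost verbatim in the independent, moment-based proof of Theorem \ref{theorem:approximation}, whereas your route is self-contained and shorter for Proposition \ref{prop:3} alone. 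Both are complete proofs.
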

The next main statement bears the spirit of Weyl's uniform distribution theory, cf. \cite{weyl1916,kuipers_niederreiter,granville20}, thus connecting  Proposition \ref{prop:3} to ergodic theory:
\begin{proposition}
    \label{prop:1}
    Let $\beta>0$ be an irrational number.
    Then,
    \begin{equation}\label{weyl}
     (X_b,\beta X_b) ~\mathrm{mod}~ 2\pi 
     \xrightarrow[b\rightarrow\infty]{d} (U_1,U_2),
    \end{equation}
    where
    $U_1, U_2\sim {U}[0,2\pi]$ are independent random variables. 
\end{proposition}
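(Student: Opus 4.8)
The plan is to establish \eqref{weyl} via the Lévy-type continuity theorem on the two-dimensional torus $\mathbb{T}^2:=(\R/2\pi\Z)^2$, which reduces the whole claim to a one-line estimate for the elementary integral $\int_0^b e^{i\lambda x}\diff x$. First I would set up the right framework: the random vector $(X_b,\beta X_b)\bmod 2\pi$ takes values in the compact abelian group $\mathbb{T}^2$, and on such a group weak convergence of probability measures is equivalent to pointwise convergence of all Fourier coefficients. Writing $\nu_b$ for the law of $(X_b,\beta X_b)\bmod 2\pi$ and $\widehat{\nu}(m,n):=\int_{\mathbb{T}^2}e^{i(m\theta_1+n\theta_2)}\nu(\diff\theta)$ for $(m,n)\in\Z^2$, it suffices to show $\widehat{\nu_b}(m,n)\to\mathbbm{1}\{(m,n)=(0,0)\}$; the right-hand side is precisely the sequence of Fourier coefficients of the normalized Haar measure on $\mathbb{T}^2$, i.e.\ of the law of $(U_1,U_2)$ with $U_1,U_2\sim U[0,2\pi]$ independent. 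The equivalence is Stone--Weierstrass together with the (automatic, since $\mathbb{T}^2$ is compact) tightness: the characters span a dense subalgebra of $C(\mathbb{T}^2)$.

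Then comes the computation. For $(m,n)=(0,0)$ the coefficient equals $1$ for all $b$. For $(m,n)\neq(0,0)$,
\[
\widehat{\nu_b}(m,n)=\E\!\left[e^{i(m+n\beta)X_b}\right]=\frac1b\int_0^b e^{i(m+n\beta)x}\diff x .
\]
Here the irrationality of $\beta$ enters, exactly once and essentially: it forces $m+n\beta\neq 0$ (if $n=0$ then $m\neq 0$; if $n\neq 0$ then $m+n\beta$ is irrational), so the integral equals $\dfrac{e^{i(m+n\beta)b}-1}{i(m+n\beta)b}$, whose modulus is at most $\dfrac{2}{|m+n\beta|\,b}\to 0$ as $b\to\infty$. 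This yields $\nu_b\Rightarrow\mathrm{Haar}$ on $\mathbb{T}^2$, which is \eqref{weyl}.

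The computation is routine and I do not expect a serious obstacle; the only care needed is conceptual. One should phrase convergence in distribution on the torus rather than on $\R^2$, observing that the limit law is atomless so the boundary identification $0\sim 2\pi$ causes no trouble, and one should make sure the convergence is joint in the pair — which is automatic precisely because we test against the full two-parameter family of characters $e^{i(m\theta_1+n\theta_2)}$ rather than against one-dimensional ones. As an alternative framing, the same statement is Weyl's equidistribution theorem for the linear flow $t\mapsto(t,\beta t)$ on $\mathbb{T}^2$, which is equidistributed with respect to Haar measure exactly when the slope $\beta$ is irrational; evaluating this flow at a uniform random time $X_b\sim U[0,b]$ and letting $b\to\infty$ then gives \eqref{weyl} directly.
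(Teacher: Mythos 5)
Your argument is correct, and it takes a genuinely different route from the paper's. The paper proves Proposition \ref{prop:1} by passing to the four-dimensional real vector $(\cos X_b,\sin X_b,\cos(\beta X_b),\sin(\beta X_b))$ and applying the method of moments: it computes the limits of all mixed moments $\E[\cos^j(X_b)\sin^k(X_b)\cos^l(\beta X_b)\sin^m(\beta X_b)]$ via the power-reduction formula \eqref{formula:cos^n} and the case analysis \eqref{formula:case1}--\eqref{formula:case2}, and matches the resulting generating function against $I_0(\sqrt{s^2+t^2})\,I_0(\sqrt{u^2+v^2})$. You instead work directly on the compact group $(\R/2\pi\Z)^2$ and invoke the character (Weyl) criterion: weak convergence there is equivalent to convergence of the Fourier coefficients $\widehat{\nu_b}(m,n)=\E[e^{i(m+n\beta)X_b}]$, and the whole proof collapses to the bound $\bigl|\tfrac1b\int_0^b e^{i\lambda x}\diff x\bigr|\le 2/(|\lambda|b)$ with $\lambda=m+n\beta\neq 0$ for $(m,n)\neq(0,0)$. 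Your route is shorter and isolates exactly where irrationality is used (it guarantees $m+n\beta\neq 0$ when $n\neq 0$), and your remark that the limit law is atomless so the identification $0\sim 2\pi$ is harmless correctly handles the only delicate point in translating torus-valued convergence back to \eqref{weyl}. What the paper's heavier computation buys is consistency with the moment-method machinery used throughout (the same lemmas drive the proof of Theorem \ref{theorem:approximation}) and an explicit limiting characteristic function expressed through the Bessel function $I_0$, which fits the paper's broader theme; your character argument is the more economical proof of the proposition taken in isolation. The two are of course related in spirit: the characters $e^{i(m\theta_1+n\theta_2)}$ are precisely the trigonometric monomials whose mixed products the paper expands by hand.
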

Its proof via the method of moments can be found in Section \ref{sec:Proofs.subsec:ApproxEigenH}.
By Proposition \ref{prop:1} and the continuous mapping theorem, we arrive at
\begin{align*}
 (X_b,\beta X_b,(1+\beta) X_b) ~\mathrm{mod}~ 2\pi 
 \xrightarrow[b\rightarrow\infty]{d}
 (U_1,U_2, U_1+U_2) ~\mathrm{mod}~ 2\pi .
\end{align*}
The conditional distribution below (when $U_3$ is given) equals
\begin{equation}\label{conditional}
    \left( (U_1-U_3,U_2+U_3) ~\mathrm{mod}~ 2\pi \right) \; | U_3 \; \eqd \; (U_1,U_2).
\end{equation}
Since the right-hand side of (\ref{conditional}) is independent of $U_3$, 
identity (\ref{conditional}) holds also unconditionally. Then
\begin{align*}
    (U_1,U_2, U_1+U_2) ~\mathrm{mod}~ 2\pi
    & \; \eqd \; (U_1-U_3,U_2+U_3, (U_1-U_3)+(U_2+U_3)) ~\mathrm{mod}~ 2\pi \\
    & \; = (U_1-U_3,U_2+U_3, U_1+U_2) ~\mathrm{mod}~ 2\pi \\
    & \; \eqd \; (U_1-U_3, 2 \pi -U_2+U_3, U_1+2\pi - U_2) ~\mathrm{mod}~ 2\pi \\
    & \; \eqd \; (U_1-U_3, -U_2+U_3, U_1 - U_2) ~\mathrm{mod}~ 2\pi,
\end{align*}
where we used the obvious relation $U_2 \eqd 2\pi- U_2$. Using the continuous mapping theorem once again, we finally get
\begin{align*}
    \cos(X_b)+\cos(\beta X_b)+\cos((1+\beta) X_b)
    &\xrightarrow[b\rightarrow\infty]{d}
    \cos(U_1)+\cos(U_2)+\cos(U_1+U_2) \\
    & \eqd 
    \cos(U_1-U_3)+\cos(U_3-U_2)+\cos(U_2-U_1),
\end{align*}
which finishes the constructive proof of Proposition \ref{prop:3}.

\subsection{Auxiliary results for modified Bessel functions}
\label{sec:Main.subsec:AuxBessel}
Results in this section are instrumental to prove Proposition \ref{prop:charfctHT} and Theorem \ref{theorem:approximation}. However, they may be also of independent interest (especially Theorem \ref{theorem:Besselfunction} below) since they state some fundamental identities about modified Bessel functions.

For convenience, introduce the notation $a_k:=\mu_k\left(\mathcal{T}^*\right)$, $k\in\N_0$, where the sequence $\mu\left(\mathcal{T}^*\right)$ was given in \eqref{eq:mu_T*}.

Using Vandermonde's identity \cite{abra}, rewrite $a_k$ for every $k\in \N_0$ in the following way: 
\begin{align}
    a_k 
    &= 
    \sum_{k_1=0}^k\sum_{k_2=0}^{k-k_1}\binom{k}{k_1,k_2,k-k_1-k_2}^2 
    = 
    \sum_{k_1=0}^k\frac{(k!)^2}{(k_1!)^2}\sum_{k_2=0}^{k-k_1}\frac{1}{(k_2!)^2((k-k_1-k_2)!)^2}\cdot \frac{((k-k_1)!)^2}{((k-k_1)!)^2} \notag\\
    &=  
    \sum_{k_1=0}^k \frac{(k!)^2}{(k_1!)^2((k-k_1)!)^2}\sum_{k_2=0}^{k-k_1} \binom{k-k_1}{k_2}^2 
    = 
    \sum_{k_1=0}^k \binom{k}{k-k_1}^2 \binom{2(k-k_1)}{k-k_1} 
    = 
    \sum_{n=0}^k\binom{k}{n}^2\binom{2n}{n}. \label{a_k}
\end{align}

The sequence $a=\{a_k\}_{k\in\N_0}$ has number A002893 in \textit{the On-Line Encyclopedia of Integer Sequences} \cite{Sloane10}. 
The following result was communicated to us by Vladeta Jovovic:  
\begin{proposition}\label{prop:vladetta}
It holds
\begin{align}
\sum_{k=0}^\infty a_k \frac{x^k}{(k!)^2} &= \left(\sum_{k=0}^\infty \frac{x^k}{(k!)^2} \right)^3=I_0^3\left(2\sqrt{x}\right),\quad x\in\C. \label{formula:vladetta}
\end{align}
\end{proposition}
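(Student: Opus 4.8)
The plan is to establish the two equalities in \eqref{formula:vladetta} separately. The second equality, namely $\left(\sum_{k\ge 0} x^k/(k!)^2\right) = I_0(2\sqrt{x})$, is immediate from the defining series \eqref{def:BesselI}: setting $n=0$ and replacing $x$ by $2\sqrt{x}$ gives $I_0(2\sqrt{x}) = \sum_{k\ge 0} \frac{1}{(k!)^2}\left(\sqrt{x}\right)^{2k} = \sum_{k\ge 0} \frac{x^k}{(k!)^2}$, so cubing yields the right-hand side. Thus the content of the proposition is the first equality, which is a Cauchy-product identity for formal power series (and convergent series, since all series involved are entire in $x$).

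For the first equality, I would expand the cube $\left(\sum_{k\ge 0} x^k/(k!)^2\right)^3$ by the Cauchy product for triple series: the coefficient of $x^k$ is $\sum_{k_1+k_2+k_3=k} \frac{1}{(k_1!)^2(k_2!)^2(k_3!)^2}$. Multiplying and dividing by $(k!)^2$, this equals $\frac{1}{(k!)^2}\sum_{k_1+k_2+k_3=k}\binom{k}{k_1,k_2,k_3}^2 = \frac{a_k}{(k!)^2}$ by the definition of $a_k = \mu_k(\mathcal{T}^*)$ in \eqref{eq:mu_T*}. Hence $\left(\sum_{k\ge 0} x^k/(k!)^2\right)^3 = \sum_{k\ge 0} a_k \frac{x^k}{(k!)^2}$, which is exactly the first equality. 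All rearrangements are justified because $\sum_k x^k/(k!)^2$ converges absolutely for every $x\in\C$ (it is dominated by $e^{|x|}$), so the Cauchy product of three such series converges absolutely to the product.

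There is essentially no obstacle here: the statement is a bookkeeping identity combining the multinomial definition of $a_k$ with the Cauchy product formula, plus recognition of the generating series $\sum_k x^k/(k!)^2$ as $I_0(2\sqrt{x})$. The only point deserving a sentence of care is the convergence justification for interchanging the cube and the summation (absolute convergence of the power series on all of $\C$), and perhaps noting that the combinatorial step $\sum_{k_1+k_2+k_3=k}\binom{k}{k_1,k_2,k_3}^2 = a_k$ is precisely \eqref{eq:mu_T*} read off directly, with no appeal to Vandermonde needed for this direction (Vandermonde was only used earlier to produce the alternative single-sum form of $a_k$, which is not required in the proof of \eqref{formula:vladetta}).
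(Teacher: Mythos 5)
Your proof is correct. It takes the same basic tool as the paper (the Cauchy product of power series) but organizes the computation differently: you expand the cube as a single triple Cauchy product, so that the coefficient of $x^k$ is $\sum_{k_1+k_2+k_3=k}(k_1!)^{-2}(k_2!)^{-2}(k_3!)^{-2}=a_k/(k!)^2$ directly from the multinomial definition \eqref{eq:mu_T*} of $a_k$. The paper instead iterates the pairwise product: it first squares the series, identifying the coefficient $\binom{2k}{k}/(k!)^2$ via the Vandermonde-type identity $\sum_{n}\binom{k}{n}^2=\binom{2k}{k}$, and then multiplies by the third factor, matching the result against the single-sum representation $a_k=\sum_{n}\binom{k}{n}^2\binom{2n}{n}$ derived just before the proposition. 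Your route is slightly more economical in that it bypasses both Vandermonde and that alternative single-sum form of $a_k$; the paper's route has the side benefit of exhibiting the intermediate identity $I_0^2(2\sqrt{x})=\sum_k \binom{2k}{k}x^k/(k!)^2$ and of tying the proposition to the A002893 single-sum formula it had just established. Your remarks on absolute convergence (the series is entire, dominated by $e^{|x|}$) are correct and suffice to justify the rearrangements.
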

Our independent proof will be given in Section \ref{sec:proofs.subsec:ModBessel}. This implies the following form of exponential generating functions of the sequence $a$ and its subsequences containing only terms of even or odd order: 
\begin{corollary}\label{prop:generating function}
    For arbitrary $x\in\C$, it holds
    \begin{align}
        \sum_{k=0}^\infty a_{k} \frac{x^{k}}{k!} 
        &=
        \int_0^{\infty} I_0^3\left(2\sqrt{xt}\right) e^{-t}\diff t, 
        \label{eq:generating_function_T}\\
        \sum_{k=0}^\infty a_{2k} \frac{x^{2k}}{(2k)!} 
        &= 
        \frac{1}{2}\int_0^{\infty} \left( I_0^3\left(2\sqrt{xt}\right) + I_0^3\left(2i\sqrt{xt}\right) \right) e^{-t}\diff t, 
        \label{eq:generating_function}\\
        \sum_{k=0}^\infty a_{2k+1} \frac{x^{2k+1}}{(2k+1)!} 
        &= 
        \frac{1}{2}\int_0^{\infty} \left( I_0^3\left(2\sqrt{xt}\right) -I_0^3\left(2i\sqrt{xt}\right) \right)e^{-t}\diff t. \label{eq:generating_function_T_odd}
    \end{align}
\end{corollary}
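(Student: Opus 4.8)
\textbf{Proof plan for Corollary \ref{prop:generating function}.}
The plan is to derive all three identities from Proposition \ref{prop:vladetta} by the standard trick of turning a squared-factorial generating function into a plain exponential generating function via the Gamma integral $\int_0^\infty t^k e^{-t}\diff t = k!$. Starting from \eqref{formula:vladetta}, which reads $\sum_{k\ge 0} a_k x^k/(k!)^2 = I_0^3(2\sqrt{x})$, I would replace $x$ by $xt$ and integrate against $e^{-t}\diff t$ over $(0,\infty)$. Formally,
\begin{align*}
\int_0^\infty I_0^3\bigl(2\sqrt{xt}\bigr) e^{-t}\diff t
= \int_0^\infty \sum_{k=0}^\infty a_k \frac{(xt)^k}{(k!)^2} e^{-t}\diff t
= \sum_{k=0}^\infty \frac{a_k x^k}{(k!)^2}\int_0^\infty t^k e^{-t}\diff t
= \sum_{k=0}^\infty a_k \frac{x^k}{k!},
\end{align*}
which is exactly \eqref{eq:generating_function_T}. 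The interchange of sum and integral is the only point needing care; I address it below.

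For \eqref{eq:generating_function} and \eqref{eq:generating_function_T_odd} the idea is to extract the even and odd parts of the series $\sum_k a_k x^k/k!$ in the usual way, using $\tfrac12\bigl(g(x)+g(-x)\bigr)$ and $\tfrac12\bigl(g(x)-g(-x)\bigr)$. Applying \eqref{eq:generating_function_T} at $x$ and at $-x$ and noting that $\sqrt{-xt} = i\sqrt{xt}$ (on the principal branch, with $I_0$ entire so no branch subtlety actually enters since $I_0^3(2\sqrt{\cdot})$ is an entire function of its argument's square), one gets
\begin{align*}
\sum_{k=0}^\infty a_k \frac{(-x)^k}{k!} = \int_0^\infty I_0^3\bigl(2i\sqrt{xt}\bigr) e^{-t}\diff t,
\end{align*}
and then half the sum and half the difference of this with \eqref{eq:generating_function_T} give the claimed formulas, since $\sum_k a_k x^k/k! = \sum_k a_{2k} x^{2k}/(2k)! + \sum_k a_{2k+1} x^{2k+1}/(2k+1)!$ splits the series into its even- and odd-indexed parts and the substitution $x\mapsto -x$ flips the sign of precisely the odd part.

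The main (and essentially only) obstacle is justifying the term-by-term integration. Since $a_k \ge 0$ and $I_0$ has nonnegative Taylor coefficients, for real $x>0$ every term $a_k (xt)^k/(k!)^2 e^{-t}$ is nonnegative and Tonelli's theorem applies directly, giving \eqref{eq:generating_function_T} for $x>0$; convergence of the right-hand side is guaranteed because $I_0(z)$ grows only like $e^{z}/\sqrt{z}$, so $I_0^3(2\sqrt{xt}) e^{-t}$ decays like $e^{-t + 6\sqrt{xt}}$, which is integrable. Both sides of \eqref{eq:generating_function_T} are then entire functions of $x$ (the left side has infinite radius of convergence since $a_k/k! \le (3^k)^2/k! \cdot (1/k!)$ grows slowly — indeed $a_k \le 9^k$ — and the right side is an absolutely convergent parameter integral of an entire integrand with locally uniform bounds), so the identity extends from $x>0$ to all $x\in\C$ by analytic continuation. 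The even/odd decompositions \eqref{eq:generating_function} and \eqref{eq:generating_function_T_odd} then follow for all $x\in\C$ by the elementary algebra above, with no further convergence issues.
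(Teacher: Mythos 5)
Your proposal is correct and follows essentially the same route as the paper: both derive \eqref{eq:generating_function_T} from Proposition \ref{prop:vladetta} by inserting $\int_0^\infty t^k e^{-t}\diff t = k!$ and interchanging sum and integral, and both obtain \eqref{eq:generating_function} and \eqref{eq:generating_function_T_odd} by evaluating at $x$ and $-x$ and taking half the sum and half the difference. The only difference is that you explicitly justify the interchange via Tonelli and analytic continuation, which the paper leaves implicit.
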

The following beautiful identity finishes the line of our results:
\begin{theorem}\label{theorem:Besselfunction}
    For every $x\in\C$, it holds
    \begin{equation}
        e^{3x/2} \sum_{n\in\Z}I_n^3 (x)
        = 
        \int_0^\infty  I_0^3\left(\sqrt{2x t}\right) e^{-t} \diff  t.  
        \label{identity1} 
    \end{equation}	
\end{theorem}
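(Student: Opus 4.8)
The plan is to match the Taylor coefficients of both sides as power series in $x$ and identify the common sequence with $a_k = \mu_k(\mathcal{T}^*)$, for which Proposition \ref{prop:vladetta} and Corollary \ref{prop:generating function} already give the needed exponential generating function. First I would handle the right-hand side: expanding $I_0^3(\sqrt{2xt})$ via \eqref{def:BesselI} and Proposition \ref{prop:vladetta} gives
\begin{align*}
I_0^3\left(\sqrt{2xt}\right)=\sum_{k=0}^\infty a_k\frac{(xt/2)^k}{(k!)^2},
\end{align*}
so that, integrating term by term against $e^{-t}\diff t$ and using $\int_0^\infty t^k e^{-t}\diff t=k!$,
\begin{align*}
\int_0^\infty I_0^3\left(\sqrt{2xt}\right)e^{-t}\diff t=\sum_{k=0}^\infty a_k\frac{x^k}{2^k\,k!}.
\end{align*}
This is exactly \eqref{eq:generating_function_T} evaluated at $x/2$, so the right-hand side is a clean, known object.

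The main work is then to show that the left-hand side $e^{3x/2}\sum_{n\in\Z}I_n^3(x)$ has the same power series $\sum_{k\ge0} a_k x^k/(2^k k!)$. Here I would use the classical generating function $\sum_{n\in\Z}I_n(x)z^n=e^{(x/2)(z+1/z)}$, which yields $\sum_{n\in\Z}I_n^3(x)$ as the constant term (in $w_1,w_2,w_3$ on the unit torus, or as a coefficient extraction) of $\exp\!\big(\tfrac{x}{2}\sum_{j=1}^3(w_j+w_j^{-1})\big)$ subject to $w_1w_2w_3=1$. Concretely, $\sum_{n\in\Z}I_n^3(x)$ is the constant term of $e^{(x/2)(w_1+w_2+w_3+w_1^{-1}+w_2^{-1}+w_3^{-1})}$ with $w_1w_2w_3=1$; equivalently, writing $w_j=e^{i\varphi_j}$, it equals $\frac{1}{(2\pi)^2}\int e^{x(\cos\varphi_1+\cos\varphi_2+\cos\varphi_3)}\diff\varphi_1\diff\varphi_2$ over the torus with $\varphi_3=-\varphi_1-\varphi_2$. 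Multiplying by $e^{3x/2}$ turns $x(\cos\varphi_1+\cos\varphi_2+\cos\varphi_3)+\tfrac{3x}{2}$ into $\tfrac{x}{2}\big(|e^{i\varphi_1}+e^{i\varphi_2}+e^{i\varphi_3}|^2\big)$ via the identity $|e^{i\varphi_1}+e^{i\varphi_2}+e^{i\varphi_3}|^2=3+2\sum_{j<l}\cos(\varphi_j-\varphi_l)$ — and this is where Theorem \ref{theorem:cotfas_integral} and Proposition \ref{prop:number of closed walks} enter. Expanding $e^{(x/2)|\cdots|^2}=\sum_k \frac{(x/2)^k}{k!}|\cdots|^{2k}$ and integrating gives precisely $\sum_k \frac{(x/2)^k}{k!}\mu_{2k}(\mathcal{H})=\sum_k a_k\frac{x^k}{2^k k!}$, using $\mu_{2k}(\mathcal{H})=\mu_k(\mathcal{T}^*)=a_k$.

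This identifies both sides as the same entire function (both series have infinite radius of convergence, since $a_k\le 9^k$), completing the proof. The main obstacle, and the step requiring care, is the manipulation of $\sum_{n\in\Z}I_n^3(x)$: one must correctly set up the triple product of generating functions, reduce the three summation variables to two via the constraint coming from the fact that only the total power $n+n+n$... — more precisely, one extracts the constant term in a single variable from $\big(\sum_n I_n z^n\big)^3$, which forces the exponent bookkeeping $w_1w_2w_3=1$ — and then recognize the $e^{3x/2}$ prefactor as exactly the shift that converts the sum-of-cosines exponent into $\tfrac12|e^{i\varphi_1}+e^{i\varphi_2}+e^{i\varphi_3}|^2$. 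Once that algebraic identity is in hand, the interchange of sum and integral is justified by absolute convergence and everything collapses to the combinatorial moments $\mu_{2k}(\mathcal{H})$ already computed. An alternative, purely series-theoretic route would be to prove $\sum_{n\in\Z}I_n^3(x)=e^{-3x/2}\sum_k a_k x^k/(2^k k!)$ directly by checking that the left side satisfies the same third-order ODE (annihilator) as the right side with matching initial conditions, using the recurrences \eqref{eq:derivative}–\eqref{eq:BesselReccursion}; I would keep this in reserve in case the constant-term computation becomes unwieldy.
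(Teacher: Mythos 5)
Your proposal is correct, but it takes a genuinely different route from the paper. The paper proves \eqref{identity1} by the holonomic-function method: after the substitution $x\mapsto 2x$ it exhibits a third-order annihilator $A_x$ (found with the Mathematica package \texttt{HolonomicFunctions}, displayed in \eqref{eq:Annihil}), verifies by hand---using the recurrences \eqref{eq:derivative}--\eqref{eq:BesselReccursion} for the Bessel sum and repeated integration by parts for the integral---that $A_x$ kills both sides, and then matches the value and the first two derivatives at $x=0$; this is precisely the ``reserve'' strategy you mention at the end. Your main argument instead identifies both sides with the series $\sum_{k\ge 0} a_k x^k/(2^k k!)$: the right-hand side via Proposition \ref{prop:vladetta} and termwise integration against $e^{-t}$ (equivalently, \eqref{eq:generating_function_T} evaluated at $x/2$), and the left-hand side via the classical generating function $\sum_{n\in\Z}I_n(x)z^n=e^{(x/2)(z+1/z)}$, which represents $\sum_{n}I_n^3(x)$ as $\frac{1}{(2\pi)^2}\iint e^{x(\cos\varphi_1+\cos\varphi_2+\cos\varphi_3)}\diff\varphi_1\diff\varphi_2$ with $\varphi_3=-\varphi_1-\varphi_2$; the prefactor $e^{3x/2}$ then upgrades the exponent to $\tfrac{x}{2}\bigl|e^{i\psi_1}+e^{i\psi_2}+e^{i\psi_3}\bigr|^2$ under $\varphi_j=\psi_j-\psi_{j+1}$, and expanding and invoking Theorem \ref{theorem:cotfas_integral} together with Proposition \ref{prop:number of closed walks} gives $\sum_k (x/2)^k\mu_{2k}(\mathcal{H})/k!=\sum_k a_k x^k/(2^k k!)$. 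All ingredients you use are established in the paper independently of Theorem \ref{theorem:Besselfunction}, so there is no circularity, and the interchanges are justified since $\bigl|e^{i\psi_1}+e^{i\psi_2}+e^{i\psi_3}\bigr|^2\le 9$ on a probability space and $a_k\le 9^k$. Your approach buys a conceptual, computer-free explanation of the identity as a restatement of the moment formula for the planar random flight; the paper's approach buys a verification relying only on Bessel recurrences and produces the annihilator, which is of independent interest. One small wording caveat: the constant term of $\bigl(\sum_n I_n z^n\bigr)^3$ in a \emph{single} variable is $\sum_{n_1+n_2+n_3=0}I_{n_1}I_{n_2}I_{n_3}$, not $\sum_n I_n^3$; your precise three-variable formulation with the constraint $w_1w_2w_3=1$ (constant term in the two remaining free variables) is the correct one, and it is what your torus integral implements.
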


\section{Local weak convergence of graphene and fullerenes}\label{sec:conjecture}

In this section, we explain the difficulties arising in the proof of Conjecture \ref{conjecture}.

\subsection{Working with duals: triangulations with degree constraints}

It is easily seen that the local weak convergence of a sequence of 3--connected planar graphs is equivalent to the local weak convergence of the sequence of its duals; 
we recall that the dual $G'$ of a planar graph $G$ has nodes representing the facets of $G$, with two nodes being connected by an edge in $G'$ if and only if their facets in $G$ have a common edge. 
The following paragraphs will only deal with the local weak convergence of the duals of fullerenes, which are triangulations with degree constraints. 

Every fullerene $F_n$ has exactly $12$ pentagonal faces (and $m=\sfrac{n}{2}-10$ hexagonal faces), so its dual $T_m$ has $12$ nodes with degree $5$ and $m$ with degree $6$; 
moreover, each face of $T_m$ is a triangle, since fullerenes are $3$-regular. 
In summary, the set of dual graphs of fullerenes is precisely the set of sphere triangulations with $12$ nodes of degree $5$ and the rest of degree $6$; 
they must also be 3-connected. 
We simply call these triangulations \emph{f-triangulations}. 

\begin{remark}
    Unlike sphere triangulations with no constraints on the degree, the number $F(m)$ of f-triangulations with $m$ vertices of degree 6 is not amenable to classical analytical methods like the ones worked out by Tutte. 
    Indeed, in the celebrated paper \cite{Thurs98}, William Thurston proved that $F(m)$ has order $m^9$ using deep geometric arguments, but no explicit formula is known. 
    The best bounds for this limit known to the authors are presented in \cite{Rukh18}, which are direct consequences of \cite{engel2018number}: 
    \begin{align*}
        \liminf_{m\rightarrow \infty} \frac{F(m)}{m^9} 
        = 
        \frac{809}{2^{15}\cdot 3^{13}\cdot 5^2},\quad
        \limsup_{m\rightarrow\infty} \frac{F(m)}{m^9} 
        =
        \frac{809\zeta(9)}{2^{15}\cdot 3^{13}\cdot 5^2},
    \end{align*}
    where $\zeta(9)= 1.002008\ldots$ is the value of the Riemann zeta function.
\end{remark}

\subsection{The weak limit of triangulations}
We now fix a sequence $\lbrace T_m\rbrace$ of random f-triangulations. 

Since the set of graphs with uniformly bounded degree is a precompact subset in the set of rooted graphs endowed with the local distance, see \cite{benjaminicurien12}, the sequence $\lbrace T_m\rbrace$ has weak limits. 
Upon extracting a subsequence, we can suppose it has a local weak limit, which is a random rooted graph $(G,o)$. 
This infinite graph should have the following properties: 
(i) the root should have degree $6$ or $5$; 
(ii) $(G,o)$ should be unimodular, as every local weak limit of graphs. 

It turns out that any weak limit of \emph{triangulations} has a mean degree of exactly 6 at the root; more precisely, $\E[\mathrm{deg}_G(o)]=6$, see \cite{albenque2021local}. 
Consequently, if $(G,o)$ is a weak limit of a sequence of triangulations, the degree of the root is almost surely \emph{equal} to 6. 
By a standard argument using unimodularity (\!\!\cite{Aldous2007}), the degree of \emph{all} vertices in $G$ must be 6. 
In other words, any weak limit of f-triangulations must be an infinite $6$-regular triangulation. 
One example is the dual of the planar hexagonal lattice $\mathcal{H}$. 
However, $\mathcal{H}$ is not the unique infinite $6$-regular planar triangulation: any \emph{folding} of $\mathcal{H}$ would have the same property. 
Yet, there is only one unimodular $6$-regular infinite triangulation of the plane \emph{with only one topological end}, and it is $\mathcal{H}$; hence, proving Conjecture \ref{conjecture} is reduced to proving that no weak limit of f-triangulations can have more than one topological end. 

\begin{remark}\label{rk:nanotubes}
    \begin{enumerate}[(i)]
        \item  It is possible to construct a sequence of f-triangulations 
            that converges towards some folding of $\mathcal{H}$; in chemistry, these structures are known as \emph{carbon nanotubes}. Conjecture \ref{conjecture} implies that these structures are rare in regard to the whole set of f-triangulations. 
        \item  Most generating algorithms, like \textit{Buckygen} 
            \cite{buckygen}, which is the most efficient known complete algorithm, or the approach of Buchstaber and Erokhovets in \cite{BuchEro17}, use a hierarchical trial-and-error approach. 
            By applying carefully chosen finite sequences of \textit{expansion} and \textit{reduction operations} to the dodecahedron (the smallest fullerene) and $C_{28}$, ultimately generates the entire set of fullerenes for a given number of vertices. 
            However, not every sequence of operations leads to a (new) fullerene. 
            Therefore, the practical construction of a uniformly distributed sample among all fullerenes with $n$ vertices is a non--trivial problem.
    \end{enumerate}
\end{remark}

\subsection{Isoperimetric properties of triangulations}

The proof of the one-endedness of weak limit of planar graphs usually proceeds by contradiction, as demonstrated in the seminal argument of Corollary 3.4 in \cite{angel2003uniform}. 

Let us denote $(T,o)$ as the weak limit of $\lbrace T_m\rbrace$, and suppose that the probability of $(T,o)$ having two distinct topological ends is $\varepsilon >0$.
This implies that, for some fixed integer $l$, with probability $\varepsilon$, there is a closed path of finite length $l$ such that $T$ with this path removed has two unbounded connected components. 
According to \eqref{def:BSConv}, this leads to the following statement: there exists a sequence $\lbrace c_j\rbrace_{j\in\N_0}$ with $c_j\rightarrow\infty$ as $j\rightarrow\infty$, and a fixed integer $l$ such that 
\begin{equation}\label{target} 
    \frac{\left| E(m,l,c_j)\right|}{\left|\lbrace\text{f-triangulations of size } m\rbrace\right|} \xrightarrow[n\rightarrow\infty]{}0,
\end{equation}
where $\left| A \right|$ denotes the cardinality of a set $A$, and $E(m,l,c)$ represents the set of f-triangulations with a closed path of length $l$ and such that $T$ deprived of this path has two connected components of size greater than $c$. One can think of $E(m,l,c)$ as graphs with a bottleneck. 

\begin{remark}
    The isoperimetric inequality from \cite{angel2018isoperimetric} implies that if a fullerene has a bottleneck path of length $l$ and $c_j > l^2/12$, then both connected components of the fullerene deprived of the path must have at least one pentagon. It is worth mentioning that in general, the isoperimetric inequality mentioned above \emph{fails} for triangulations allowed to have degrees strictly smaller than 5 (In Remark \ref{rk:nanotubes} we mention the existence of f-triangulations with an arbitrary small isoperimetric constant). 
\end{remark} 

To prove \eqref{target}, a classical argument is needed, similar to the one in \cite{angel2003uniform}. 
Let us denote by $A(m,l,p)$ the set of all planar graphs with $m$ vertices, in which all faces are triangles except one which must have $l$ faces, and in which all the internal nodes have degree $6$ except exactly $p$ having degree $5$; then, 
\begin{align*}
    E(m,l,c)\leq 
    \sum_{p_1 + p_2 \leq 12}
    \sum_{k=c}^{m-c} 
    A(k,l, p_1)A(m-k,l,p_2).
\end{align*}

For triangulations without degree constraints, estimates for the number of triangulations with a non-triangular face were available thanks to Tutte's works; but, this is not the case when degree constraints are added. In conclusion, no efficient bound for $A(k,l,p)$ is known, and we could not extract it from Thurston's paper.

\section{Proofs}\label{sec:proofs}
We start proving statements about Bessel functions formulated in Section \ref{sec:Main.subsec:AuxBessel}.

\subsection{Results for modified Bessel functions}\label{sec:proofs.subsec:ModBessel}

Proof of Proposition \ref{prop:vladetta}:
\begin{proof}
    By the convolution formula for two infinite series, 
    it holds for any two complex sequences 
    $\lbrace a_k\rbrace_{k\in\N_0}$, $\lbrace b_k\rbrace_{k\in\N_0}$ 
    that
    \begin{align*}
        \left( \sum_{k=0}^\infty a_k x^k \right)\cdot 
        \left( \sum_{k=0}^\infty b_k x^k \right) 
        = 
        \sum_{k=0}^\infty c_k x^k,
    \end{align*}
    where 
    \begin{align*}
        c_k: = \sum_{n=0}^k a_{k-n} b_n.
    \end{align*}
    Hence, it holds
    \begin{align*}
        \left(\sum_{k=0}^\infty \frac{x^k}{(k!)^2} \right)^2 
        &= 
        \left(\sum_{k=0}^\infty \frac{1}{(k!)^2} x^k \right) \cdot \left(\sum_{k=0}^\infty \frac{1}{(k!)^2} x^k \right) 
        = 
        \sum_{k=0}^\infty c_k x^k 
        = 
        \sum_{k=0}^\infty \frac{1}{(k!)^2}\binom{2k}{k} x^k,
    \end{align*}
    since
    \begin{align*}
        c_k 
        &= 
        \sum_{n=0}^k \frac{1}{((k-n)!)^2}\cdot \frac{1}{(n!)^2} \cdot \frac{(k!)^2}{(k!)^2} 
        = 
        \frac{1}{(k!)^2}\sum_{n=0}^k \binom{k}{n}^2 
        = 
        \frac{1}{(k!)^2}\binom{2k}{k}.
    \end{align*}
    We can compute the middle term in (\ref{formula:vladetta}) analogously:
    \begin{align*}
        \left(\sum_{k=0}^\infty \frac{x^k}{(k!)^2} \right)^3 
        &= 
        \left(\sum_{k=0}^\infty \frac{x^k}{(k!)^2} \right)^2\cdot \left(\sum_{k=0}^\infty \frac{x^k}{(k!)^2} \right) 
        = 
        \left( \sum_{k=0}^\infty \frac{x^k}{(k!)^2}\binom{2k}{k} \right)\cdot\left(\sum_{k=0}^\infty \frac{x^k}{(k!)^2} \right) 
        = 
        \sum_{k=0}^\infty d_k x^k 
        = 
        \sum_{k=0}^\infty a_k \frac{x^k}{(k!)^2},
    \end{align*}
    where 
    \begin{align*}
        d_k 
        = 
        \sum_{n=0}^k 
        \frac{1}{((k-n)!)^2}\binom{2(k-n)}{k-n}\frac{1}{(n!)^2}\cdot \frac{(k!)^2}{(k!)^2} 
        = 
        \frac{1}{(k!)^2} \sum_{n=0}^k \binom{k}{k-n}^2\binom{2(k-n)}{k-n} 
        = 
        \frac{1}{(k!)^2} a_k.
    \end{align*}
    The last equality holds due to (\ref{a_k}).
    The second equation of (\ref{formula:vladetta}) holds due to the definition of the modified Bessel function of first order.
\end{proof}

Proof of Corollary \ref{prop:generating function}:

\begin{proof}
    Using formula (\ref{formula:vladetta}) and the identity
    \begin{align*}
        \int_0^\infty \frac{t^{k}}{k!}e^{-t}\diff t =1, \quad k\in\N_0
    \end{align*}
    yields
    \begin{align*}
        \sum_{k=0}^\infty \frac{x^k}{k!} a_k 
        =   
        \int_0^\infty \sum_{k=0}^\infty \frac{(xt)^k}{(k!)^2} a_k e^{-t} 
        \diff t 
        = 
        \int_0^\infty I_0^3\left(2 \sqrt{xt} \right) e^{-t}
        \diff t ,
    \end{align*}
    where we interchanged the order of the sum and the integral above. 
    This proves the relation \eqref{eq:generating_function_T}.
    
    To show \eqref{eq:generating_function} and \eqref{eq:generating_function_T_odd}, notice that 
    \begin{align*}
        \int_0^\infty I_0^3\left(2 \sqrt{xt} \right) e^{-t}\diff t 
        &= 
        \sum_{k \text{ even} }^\infty \frac{x^k}{k!} a_k + \sum_{k \text{ odd}}^\infty \frac{x^k}{k!} a_k, \\ 
        \int_0^\infty I_0^3\left(2 \sqrt{-xt} \right) e^{-t}\diff t 
        &= 
        \sum_{k \text{ even}}^\infty \frac{x^k}{k!} a_k 
        - 
        \sum_{k \text{ odd}}^\infty \frac{x^k}{k!} a_k. 
    \end{align*}
    Summing up the left- and the right-hand sides of the above relations and dividing by two yields
    \begin{align*} 
        \sum_{k \text{ even}}^\infty \frac{x^k}{k!} a_k 
        &= 
        \frac{1}{2}\int_0^\infty \left( I_0^3\left(2 \sqrt{xt} \right) 
        +
        I_0^3\left(2 i\sqrt{xt} \right) \right) e^{-t}\diff t,\\
        \sum_{k \text{ odd}}^\infty \frac{x^k}{k!} a_k 
        &= 
        \frac{1}{2}\int_0^\infty \left( I_0^3\left(2 \sqrt{xt} \right) 
        -
        I_0^3\left(2 i\sqrt{xt} \right) \right) e^{-t}\diff t,
    \end{align*}
    which finishes the proof.
\end{proof}

Proof of Theorem \ref{theorem:Besselfunction}:
\begin{proof}
    Rewrite the statement (\ref{identity1}) in a more convenient way by taking $e^{3x/2}$ to the right-hand side and evaluating the whole expression at $2x$: 
    \begin{equation}
        \sum_{n\in\Z}I_n^3 (2x)
        = 
        \int_0^\infty I_0^3\left(2\sqrt{x t}\right) e^{-t-3x} \diff t.
    \label{identity1_mod} 
    \end{equation}
    
    The main idea of this proof is to show that both sides of (\ref{identity1_mod}) are the unique solution of the initial value problem
    \begin{align*}
        A_x u(x) = 0,
    \end{align*}
    subject to the initial conditions:
    \begin{align*}
        u(0) = 1,\quad 
        \left. D_x u(x)\right|_{x=0} = 0,\quad 
        \left.D_x^2 u(x)\right|_{x=0} = 6.
    \end{align*}
    To obtain the differential operator $A_x$,
    the Wolfram Mathematica package \texttt{HolonomicFunction}, V.~1.7.3 \cite{koutchan2009,koutchan2010}
    is available.
    
    \textbf{Step 1.} We obtain the annihilator for the right-hand side of (\ref{identity1_mod}). 
    The command
    \begin{equation*}\tt
        ann 
        = 
        Annihilator[
          Integrate[Exp[-3x-t] 
            BesselI[0, 2\,Sqrt[x t]]^3, \{t,0,Infinity\}], 
          Der[x]]
    \end{equation*}
    yields the annihilator
    \begin{align}\label{eq:Annihil}
        A_x 
        = 
        x^2 D_x^3 - \left(x^2-3 x\right) D_x^2 - 
        \left(24 x^2+2 x-1\right) D_x - \left(36 x^2+24 x\right),
    \end{align}
    where $D_x=\partial/\partial x$ represents the differential operator with respect to $x$.
    
    \textbf{Step 2.} We first see that the left-hand side of (\ref{identity1_mod}),
    \begin{equation}
    \label{convergence0}
     b(x) = \sum_{n\in\Z}I_n^3(2x),
    \end{equation}
    exists.
    Since for $n\in\Z$,
    \[
     I_n(2x) = \sum_{m=0}^\infty \frac{x^{n+2m}}{m!\Gamma(n+m+1)},
    \qquad I_{-n}(2x) =I_n(2x), \qquad |I_n(2x)|\le I_n(2|x|),
    \]
    we have for $n\ge 0$,
    \begin{equation}\label{bound}
        |I_n(2x)| 
        \le 
        \frac{|x|^n}{n!} \sum_{m=0}^\infty \frac{|x|^{2m}}{m!} 
        = 
        \frac{|x|^n}{n!} e^{|x|^2},
        \qquad
        |I^3_n(2x)| 
        \le 
        \frac{|x|^{3n}}{n!} e^{3|x|^2}
    \end{equation}
    and hence (\ref{identity1_mod}) is bounded above by
    $|b(x)|\le 2 e^{|x|^3+3|x|^2}$, $x\in\C$.
    
    To show that $A_x$ is the annihilator for $b(x)$, we use the telescoping method.
    The command
    \[
    \tt
     \{ann, Qn\} = Simplify[CreativeTelescoping[BesselI[n, 2 x]^3, S[n] - 1, Der[x]]]
    \]
    yields
    \[
     A_x I_n(2x)^3 - g_{n+1}(x) + g_n(x) = 0,
    \]
    where $A_x$ is the same operator given in (\ref{eq:Annihil}) and
    \[
     g_n(x) = Q_{x,n} I_n^3(2x)
    \]
    with
    \begin{align*}
        Q_{x,n} 
        =& 
        \frac{1}{8} D_x \left(4 n^2+n (25-12 x)+8 x (2 x-1)\right)+\frac{1}{8} D_x^2 (x (25 n-4 x)) \\
        & 
        -\frac{3 \left(7 n^3+4 n x (21 x+1)+48 x^3\right)}{8 x} + 
        \left(24 x^2\right) S_n,
    \end{align*}
    where $S_n$ is the shift operator $S_n f(n)=f(n+1)$.
    Applying the three-term relation 
    \[
     n I_n(2x) = x (I_{n-1}(2x)-I_{n+1}(2x)),
    \]
    $g_n(x)$ is simplified as
    \begin{align*}
         g_n(x) 
         = 
         3 x n^{-3} I_{n-1}(2x)\Bigl(
         & 8 n^3 x I_{n-1}^2(2 x) \\
         & + n (n^3 - 4 n^2 x + 16 x^3) I_{n-1}(2 x) I_n(2 x) \\
         & + n^2 (n^2-24 x^2+n (4x-1)) I_{n}^2(2x) \Bigr).
    \end{align*}
    By (\ref{bound}), we have $\lim_{n\to\infty} g_{n}(x)=0$, and hence
    \begin{equation}\label{convergence}
    \sum_{n=-\infty}^\infty A_x I_n^3(2x)
         = \lim_{n,n'\to\infty}\sum_{m=-n}^{n'} A_x I_m^3(2x)
         = \lim_{n,n'\to\infty} (-g_{n'+1}(x)+g_{-n}(x)) 
         = 0
    \end{equation}
    for each $x\in\C$.
    
    Next, we will see that
    \begin{equation}\label{A-changeable}
         A_x \sum_{n=-\infty}^\infty I_n^3(2x)
         = 
         \sum_{n=-\infty}^\infty A_x I_n^3(2x).
    \end{equation}
    Recall that each term of $A_x$ is of the form $x^j D_x^k$, $D_x=\partial/\partial x$.
    It suffices to show that
    \begin{equation}\label{D-changeable}
         D_x^k \sum_{n=-\infty}^\infty I_n^3(2x)
         = 
         \sum_{n=-\infty}^\infty D_x^k I_n^3(2x).
    \end{equation}
    For $h\ne 0$, let
    \[
     \Delta_x^h f = (f(x+h)-f(x))/h.
    \]
    Then, by checking the absolute convergence, it holds
    \[
     (\Delta_x^h)^k \sum_{n=-\infty}^\infty I_n^3(2x)
     = 
     \sum_{n=-\infty}^\infty (\Delta_x^h)^k I_n^3(2x),
    \]
and because there exists $F_n^k(x)$ such that\footnote{
    For example, when $k=1$,
    $|\Delta_x^h I_n^3(2x)|
    =
    6 |I_n(2(x+\theta h))|^2|\dot I_n(2(x+\theta h))|
     \le 6 |I_n(2(|x|+1))|^2||\dot I_n(2(|x|+1))| =:F^1_n(x)$, and $\sum_{n=-\infty}^\infty F^1_n(x)<\infty$ by (\ref{eq:derivative}) and (\ref{bound}).}
    \[
     \sup_{|h|<\varepsilon}|(\Delta_x^h)^k I_n^3(2x)| \le F_n^k(x), \quad \sum_{n=-\infty}^\infty F^k_n(x)<\infty,
    \]
    by the dominated convergence we have 
    \[
     \lim_{h\to 0}(\Delta_x^h)^k \sum_{n=-\infty}^\infty I_n^3(2x)
     = 
     \sum_{n=-\infty}^\infty \lim_{h\to 0}(\Delta_x^h)^k I_n^3(2x).
    \]
    That is, (\ref{D-changeable}), and hence (\ref{A-changeable}) holds.
    Now, by combining (\ref{A-changeable}) and (\ref{convergence}), we establish
    \[
     A_x \sum_{n=-\infty}^\infty I_n^3(2x)
= 0, \quad x\in\C.
    \]
 
    \textbf{Step 3.} Here we note that the left- and right-hand sides of (\ref{identity1_mod}) are entire functions.
    Let $\Omega\ni 0$ be a bounded region, e.g., $\Omega=\{x\in\C\,|\,|x|\le 1\}$.
    The convergence in (\ref{convergence0}) is uniform on $\Omega$, the limit $b(x)=\sum_{n=-\infty}^\infty I_n(2x)^3$ is analytic on $\Omega$ as well
    (Weierstrass's theorem, \cite{ahlfors78}).
    Moreover, the limit function $b(x)$ has an annihilator $A_x$ whose possible singularities are $x=0$ and $\infty$ (since the coefficient of the derivative of the highest degree is $x^2$).
    Hence, $b(x)$ is an entire function whose singularity is $x=\infty$ only.
    The right-hand side of (\ref{identity1_mod}) is also an entire function. 
    
    \textbf{Step 4.}
    We show that both sides of (\ref{identity1_mod}) coincide at least as a point up to the second derivatives.
    If $A_x$ is the annihilator for the analytic function
    \begin{align*}
     g(x) := \sum_{j=0}^\infty c_j x^j,
    \end{align*}
    the coefficients have to satisfy the recurrence relation
    \begin{align*}
    c_j 
    = 
    \frac{1}{j^3}
    \Bigl( (j-1)j c_{j-1} + 24(j-1) c_{j-2} + 36 c_{j-3}\Bigr),
    \quad j\in\N_0.
    \end{align*}
    Therefore, once $g(0)=c_0$, $g'(0)=c_1$, $g''(0)=2 c_2$ are given, the whole sequence $\{c_j\}_{j\in\N_0}$ is determined uniquely.
    To prove (\ref{identity1_mod}), it remains to show that the initial values of its left- and right-hand sides as well as their first two derivatives at $x=0$ coincide.
    
    By $I_n(0)=\1(n=0)$ and the derivative formula (\ref{eq:derivative}), one gets
    \begin{align*}
        \sum_{n\in\Z}I_n^3(2x) \Big|_{x=0} 
        =& 
        \sum_{n\in\Z}I_n^3(0) = 1, \\
        D_x \sum_{n\in\Z}I_n^3(2x) \Big|_{x=0} 
        =& 
        \sum_{n\in\Z} 6 I_n^2(0) I_{n+1}(0) = 0, \\
        D_x^2 \sum_{n\in\Z} I_n^3(2x) \Big|_{x=0} 
        =&
        \sum_{n\in\Z} \Bigl(12 I_{n-1}(0) I_n^2(0) I_{n+1}(0)
        + 12 I_{n}(0) I_{n+1}^2(0) + 6 I_{n}^3(0) + 6 I_{n}^2(0) I_{n+2}(0)\Bigr) = 6.
    \end{align*}
    On the other hand, it is easy to see that
    \begin{align*}
        f(t;0) = e^{-t}, \qquad
        D_x f(t;0) = 3 e^{-t}(t-1), \qquad 
        D_x^2 f(t;0) = \frac{3}{2} e^{-t} (5 t^2-12 t+6),
    \end{align*}
    and, hence
    \begin{align*}
        \int_0^\infty f(t;0) = 1, \qquad
        \int_0^\infty D_x f(t;0) = 0, \qquad
        \int_0^\infty D_x^2 f(t;0) = 6,
    \end{align*}
    which proves the claim.
\end{proof}

\subsection{Densities and characteristic functions of $\boldsymbol{H}$ and $\boldsymbol{T}$} \label{sec:proofs.subsec:CharGenFct}
Proof of Proposition \ref{proposition:densities_for_sequences}:
\begin{proof}
    For $k\in\N$, Proposition \ref{prop:moments_X} and Proposition \ref{prop:number of closed walks} imply that 
    \begin{align}
        \mu_{2k}(\mathcal{H})=\int_{\R} x^{2k}f_X(x)\diff x. \label{proof_formula1}
    \end{align}
    Next, we need to adjust $f_X$ such that (\ref{proof_formula1}) also holds for odd orders $2k+1$, $k\in\N_0$.
    Since $\mu_{2k+1}(\mathcal{H})=0$, the corresponding density has to be symmetric. 
    Therefore, we expand the support of $f_X$ onto the whole real line by replacing $x$ by its absolute value. 
    Further, we normalize the density dividing it by 2. 
    It follows immediately that $f_H$ is a density, and its $k^{\text{th}}$ moment coincides with $\mu_{k}(\mathcal{H})$ for $k\in\N$.
    Due to Proposition \ref{pro:density_X_support}, the support of $f_H$ is the interval $[-3,-3]$. Thus, the distribution of $H$ is uniquely identified by its moment sequence with density $f_H$, cf.\ e.g., \cite{Kallen02}. 
    
    Using the relationship $\mu_k\left(\mathcal{T}^*\right)=\mu_{2k}(\mathcal{H})$, $k\in\N_0$, and changing variables $x \mapsto \sqrt{z}$ we get
    \begin{align*}
        \mu_k\left(\mathcal{T}^*\right)&=\int_0^\infty x^{2k} f_H(x)\diff x=\int_0^\infty x^{2k} \int_0^\infty txJ_0(tx)J_0^3(t)\diff t \diff x\\
        &=\int_0^\infty \frac{z^k}{2\sqrt{z}} \int_0^\infty t \sqrt{z} J_0(t\sqrt{z})J_0^3(t) \diff t \diff z 
        = \int_0^\infty z^k \int_0^\infty \frac{1}{2}t J_0(t\sqrt{z})J_0^3(t)\diff t \diff z.
    \end{align*}
    It is easy to see that $f_T(z):= \frac{1}{2} \int_0^\infty  t J_0(t\sqrt{z})J_0^3(t) \diff t$ is a density with support $[0,9]$ and  $k^{\text{th}}$ moments $\mu_{k}(\mathcal{T}^*)$ for $k\in\N$.
    Hence, $f_T$ is the distribution density of $T$.
    By the same change of variables, both expressions for $f_H$ and $f_T$  via ${}_2F_1$ follow directly from Proposition \ref{pro:density_X_support}.
\end{proof}
Proof of Proposition \ref{prop:charfctHT}:
\begin{proof}
    Let us prove relation \eqref{eq:charFH} by calculating the generating function 
    \begin{align*}
    G_H(x):=\sum_{k=0}^\infty \frac{a_k}{(2k)!}x^{2k}, \quad x\in\C,
    \end{align*}
    of the moment sequence $\mu(\mathcal{H})$ and then setting $x=is$.
    Using the relation
    \begin{align*}
        \int_0^1 t^k (1-t)^k \, \diff t=\frac{(k!)^2}{(2k+1)!}, \quad k\in\N_0,
    \end{align*}
    and  formula (\ref{formula:vladetta}), we write
    \begin{align*}
        G_H(x)
        =
        \sum_{k=0}^\infty \frac{a_k}{(k!)^2} \frac{(k!)^2}{(2k)!} x^{2k} 
        =
        \sum_{k=0}^\infty \frac{a_k}{(k!)^2} \frac{(k!)^2}{(2k+1)!} \frac{\diff}{\diff x} x^{2k+1} 
        &=
        \frac{\diff}{\diff x} \left[ x \int_0^1 \sum_{k=0}^\infty \frac{a_k}{(k!)^2} t^k (1-t)^k x^{2k} \, \diff t \right] \\
        &=
        \frac{\diff}{\diff x} \left[ x \int_0^1 I_0^3 \left( 2x \sqrt{t (1-t)} \right) \, 
        \diff t \right], 
    \end{align*}
    where we exchanged the order of the sum, integral and derivative.
    
    Relation \eqref{eq:charFT} follows from Corollary \ref{prop:generating function}, formula \eqref{eq:generating_function_T} after the substitution $t=-\log y$ in the integral and putting $x=is$.
\end{proof}

\subsection{Approximation of random eigenvalues of $\boldsymbol{T}$}\label{sec:Proofs.subsec:ApproxEigenH} 
Now, we can show our Theorem \ref{theorem:approximation}.

\begin{proof}[Proof of Theorem \ref{theorem:approximation}]
    In order to prove the claim we notice that the support of $3+2Y_b$ is a subset of $[0,9]$ for every $b\in\R$. 
    Hence, the convergence in distribution as $b\to + \infty$ is equivalent to the convergence of moments of $3+2Y_b$ to those of $T$, see e.g., \cite{Kallen02}. 
    For simplicity, let us consider even moments only. The following computation works analogously for odd moments with some changes presented at the end of this proof.
    
    Before we show 
    \begin{align*}
        \E\left(3+2Y_b\right)^{2k}
        \xrightarrow[b\rightarrow + \infty]{}
        \E T^{2k}
        =
        \sum_{k_1+k_2+k_3=2k}\binom{2k}{k_1,k_2,k_3}^2, 
        \quad k\in\N_0,
    \end{align*}
    let us state two observations. 
    
    First, for $k\in\N_0$ recall the well-known formulae
    \begin{align}
        \cos^{k}(x)&=\begin{cases}
        \frac{1}{2^{k}} \binom{k}{k/2}+\frac{1}{2^{k-1}}\sum\limits_{n=0}^{\frac{k}{2}-1} \binom{k}{n} \cos \left( (k-2n)x \right),\quad &\text{ if }k \text{ is even,} \\
        \frac{1}{2^{k-1}}\sum\limits_{n=0}^{\frac{k-1}{2}} \binom{k}{n} \cos \left( \left(k-2n \right)x \right),\quad &\text{ if }k \text{ is odd.} 
        \end{cases}\label{formula:cos^n}
    \end{align}
    
    Second, for $m_1,m_2,m_3\in\N_0$ the following two cases hold.
    
    \textbf{Case 1:} If $m_i\not = m_j$ for (at least) one pair $(i,j)\in\lbrace (1,2),(1,3),(2,3) \rbrace$, we get
    \begin{align}
        &\lim_{b\rightarrow \infty}\frac{1}{b}\int_0^b \cos\left( m_1x \right)\cos\left( m_2\beta x \right)\cos\left( m_3(1+\beta)x \right) \diff x \notag\\
        &=\lim_{b\rightarrow \infty}\frac{1}{4b}\left( 
        \frac{\sin\left( \left( m_1-m_2\beta+m_3(1+\beta) \right)b \right)}{m_1-m_2\beta+m_3(1+\beta)} + 
        \frac{\sin\left( \left( m_1+m_2\beta-m_3(1+\beta) \right)b \right)}{ m_1+m_2\beta-m_3(1+\beta)} \right. \notag \\
        &\quad+
        \left.\frac{\sin\left( \left( m_1-m_2\beta-m_3(1+\beta) \right)b \right)}{m_1-m_2\beta-m_3(1+\beta)} +
        \frac{\sin\left( \left( m_1+m_2\beta+m_3(1+\beta) \right)b \right)}{m_1+m_2\beta+m_3(1+\beta)}
        \right)=0. \label{formula:case1}
    \end{align}
    
    \textbf{Case 2:}  For $m:=m_1=m_2=m_3 \not= 0$, we get
    \begin{align}
        &\lim_{b\rightarrow \infty} \frac{1}{b}\int_0^b \cos\left( m_1x\right)\cos\left( m_2\beta x\right)\cos\left(m_3(1+\beta)x\right) \diff x \notag\\
        &=\lim_{b\rightarrow\infty} \frac{1}{4b}\left( b +\frac{\sin\left( 2mb \right)}{2m}+\frac{\sin\left( 2\beta mb \right)}{2\beta m}+\frac{\sin\left( 2(1+\beta) mb \right)}{2(1+\beta) m} \right)=\frac{1}{4}. \label{formula:case2}
    \end{align} 
    Due to the binomial theorem it holds
    \begin{align*}
        \lim_{b\rightarrow \infty}\E\left(3+2Y_b \right)^{2k} 
        &= \lim_{b\rightarrow\infty}\E\left(\sum_{k_1=0}^{2k} \binom{2k}{k_1}3^{k_1}2^{2k-k_1} Y_b^{2k-k1}\right) =\sum_{k_1=0}^{2k}\binom{2k}{k_1}3^{k_1}2^{2k-k_1} \lim_{b\rightarrow\infty}\E Y_b^{2k-k_1},
    \end{align*}
    for $k\in\N_0$.
    Next, we consider the limit separately. 
    We apply the multinomial theorem to $Y_b^{2k-k_1}$ and get
    \begin{align*}
        \lim_{b\rightarrow\infty}\E Y_b^{2k-k_1} = &\lim_{b\rightarrow\infty}\E\left( \cos X_b+\cos(\beta X_b)+\cos \left((1+\beta) X_b\right)\right)^{2k-k_1}\\
        = &\lim_{b\rightarrow \infty} \E \left( \sum_{k_2=0}^{2k-k_1}\sum_{k_4=0}^{2k-k_1-k_2}\binom{2k-k_1}{k_2,k_4,2k-k_1-k_2-k_4} \right. \\
         & \cdot \left. (\cos X_b)^{k_2} (\cos(\beta X_b))^{k_4} (\cos\left((1+\beta)X_b\right))^{2k-k_1-k_2-k_4} \right)\\
        = &\sum_{k_2=0}^{2k-k_1}\sum_{k_4=0}^{2k-k_1-k_2}\binom{2k-k_1}{k_2,k_4,2k-k_1-k_2-k_4} \\
        & \cdot \lim_{b\rightarrow \infty} \frac{1}{b} \int_0^b (\cos x)^{k_2}(\cos(\beta x))^{k_4} (\cos\left((1+\beta)x\right))^{2k-k_1-k_2-k_4} \diff x.
    \end{align*}
    Due to (\ref{formula:cos^n}), (\ref{formula:case1}), (\ref{formula:case2}) we need to consider both cases when all three $k_1,k_2,k_4$ are either even or odd. Otherwise, the limit of the integral is zero.
    In the case of even $k_j$, it holds
    \begin{align*}
        &\lim_{b\rightarrow \infty}\frac{1}{b}\int_0^b (\cos x)^{k_2}(\cos(\beta x))^{k_4} (\cos\left((1+\beta)x\right))^{2k-k_1-k_2-k_4} \diff x \\ 
        &=\frac{1}{2^{2k-k_1}}\binom{k_2}{k_2/2}\binom{k_4}{k_4/2}\binom{2k-k_1-k_2-k_4}{\frac{2k-k_1-k_2-k_4}{2}}  
        +\frac{1}{2^{2k-k_1-1}}\sum_{n=0}^{\frac{k_2}{2}-1}\binom{k_2}{n}\binom{k_4}{\frac{k_4-k_2}{2}+n}\binom{2k-k_1-k_2-k_4}{\frac{2k-k_1-2k_2-k_4}{2}+n}.
    \end{align*} 
    In the case of odd $k_j$, we get
    \begin{align*}
        &\lim_{b\rightarrow \infty}\frac{1}{b}\int_0^b (\cos x)^{k_2}(\cos(\beta x))^{k_4} (\cos\left((1+\beta)x\right))^{2k-k_1-k_2-k_4} \diff x \\
        &=\frac{1}{2^{2k-k_1-1}}\sum_{n=0}^{\frac{k_2-1}{2}}\binom{k_2}{n}\binom{k_4}{\frac{k_4-k_2}{2}+n}\binom{2k-k_1-k_2-k_4}{\frac{2k-k_1-2k_2-k_4}{2}+n}.
    \end{align*}
    In total, it follows that
    \begin{align*}
        \lim_{b\rightarrow \infty}\E\left(3+2Y_b \right)^{2k} = &\underbrace{\sum_{k_1+k_2+k_3+k_4=k }3^{2k_1}\binom{2k}{2k_1,2k_2,2k_3,2k_4 } \binom{2k_2}{k_2} \binom{2k_3}{k_3} \binom{2k_4}{k_4}}_{=:r_k}\\
        &+\underbrace{2\sum_{k_1+k_2+k_3+k_4=k } 3^{2k_1} \binom{2k}{2k_1,2k_2,2k_3,2k_4} \sum_{n=0}^{k_2-1} \binom{2k_2}{n} \binom{2k_3}{k_3-k_2+n} \binom{2k_4}{k_4-k_2+n}}_{=: s_k}\\
        &+\underbrace{2 \sum_{k_1+k_2+k_3+k_4=k-2 }3^{2k_1+1} \binom{{\scriptstyle 2k}}{{\scriptstyle 2k_1+1},{\scriptstyle 2k_2+1},{\scriptstyle 2k_3+1},{\scriptstyle 2k_4+1}} \sum_{n=0}^{k_2} \binom{{\scriptstyle 2k_2+1}}{{\scriptstyle n}}\binom{{\scriptstyle 2k_3+1}}{{\scriptstyle k_3-k_2+n}}\binom{{\scriptstyle 2k_4+1}}{{\scriptstyle k_4-k_2+n}}}_{=: t_k},
    \end{align*}
    where these sums run over indices $k_1,\ldots, k_4\in \N_0$: $k_1+\ldots+k_4=k$ or $k-2$. 
    Here, we tacitly use the convention that the binomial coefficient $\binom{a}{b}=0$ whenever $a=0$ or $b<0$. 
    
    Next, we compute the generating functions of the sequences $r_k,s_k,t_k$ separately.
    Note that
    \begin{align*}
        \sum_{k=0}^\infty \frac{ 3^{2k}x^{k} }{(2k)!}= \cosh\left(3\sqrt{x}\right) \quad \text{ and }\quad
        \sum_{k=0}^\infty \frac{x^{k}}{(k!)^2} = I_0\left(2\sqrt{x}\right).
    \end{align*}
    Hence, one gets
    \begin{align*}
        H_1(x) := \sum_{k=0}^\infty \frac{x^k}{(2k)!} r_k = \cosh\left(3\sqrt{x}\right)I_0^3\left(2\sqrt{x}\right).
    \end{align*}
    Further, recall that for $a:=k_2-n$ it holds
    \begin{align*}
        \sum_{k=|a|}^\infty \frac{x^{k}}{(k-a)!(k+a)!} = I_{2a}\left(2\sqrt{x}\right) \quad \text{and}\quad \sum_{n=0}^\infty \frac{x^{n}}{n!(n+2a)!} = I_{2a}\left(2\sqrt{x}\right)x^{-a}.
    \end{align*}
    This yields
    \begin{align*}
        H_2(x):= \sum_{k=0}^\infty \frac{x^k}{(2k)!} s_k 
        =&~ 2\cosh(3\sqrt{x})\sum_{k_2=0}^\infty\sum_{n=0}^{k_2-1}\frac{x^{k_2}}{n!(2k_2-n)!}I_{2(k_2-n)}^2\left(2\sqrt{x}\right) \\
        =&~ 2\cosh(3\sqrt{x})\sum_{n=0}^\infty\sum_{k_2=n+1}^\infty\frac{x^{k_2}}{n!(2k_2-n)!}I_{2(k_2-n)}^2\left(2\sqrt{x}\right) \\
        =&~ 2\cosh\left(3\sqrt{x}\right)\sum_{a=1}^\infty\left(\sum_{n=0}^\infty\frac{x^{n}}{n!(n+2a)!}\right)x^{a}I_{2a}^2(2\sqrt{x}) =~ 2\cosh\left(3\sqrt{x}\right)\sum_{a=1}^\infty I_{2a}^3\left(2\sqrt{x}\right).
    \end{align*}
    Next define $a:= 2(k_2-n)+1$, and note that
    \begin{align*}
        \sum_{k_1=0}^\infty \frac{3^{2k_1+1}x^{k_1}}{(2k_1+1)!} = \sinh(3\sqrt{x})x^{-1/2},\quad \sum_{k_3=\max \lbrace a,-a-1\rbrace}^\infty \frac{x^{k_3}}{(k_3-a)!(k_3+a+1)!} = I_{2a-1}(2\sqrt{x})x^{-1/2},
    \end{align*}
    as well as
    \begin{align*}
        \sum_{n=0}^\infty \frac{1}{n!(n+a)!}x^{n} = I_{a}(2\sqrt{x})x^{-a/2}.
    \end{align*}
    Hence, we get
    \begin{align*}
        H_3(x):= \sum_{k=0}^\infty \frac{x^k}{(2k)!} t_k
        =&~ 2\sinh(3\sqrt{x})x^{-1/2}\sum_{k_2=0}^\infty\sum_{n=0}^{k_2}\frac{x^{k_2}}{n!(2k_2-n+1)!}I^2_{2(k_2-n)-1}(2\sqrt{x}) x \\
        =&~ 2\sinh(3\sqrt{x})x^{-1/2}\sum_{n=0}^\infty\sum_{k_2=n}^\infty\frac{x^{k_2}}{n!(2k_2-n+1)!}I^2_{2(k_2-n)-1}(2\sqrt{x}) x \\
        =&~ 2\sinh(3\sqrt{x})x^{-1/2}\sum_{k_2'=0}^\infty\left(\sum_{n=0}^\infty\frac{x^{n}}{n!(n+2k_2'+1)!}\right)x^{k_2'}I^2_{2k_2'+1}(2\sqrt{x}) x \\
        =&~ 2\sinh(3\sqrt{x})\sum_{k_2'=0}^\infty I^3_{2k_2'+1}(2\sqrt{x}),
    \end{align*}
    where $k_2':=k_2-n$.
    Finally, we have
    \begin{align*}
        H(x) =& H_1(x) + H_2(x) + H_3(x) \\
        =& \cosh\left(3\sqrt{x}\right) I_{0}^3\left(2\sqrt{x}\right) + 2\cosh\left(3\sqrt{x}\right)\sum_{k=1}^\infty I_{2k}^3\left(2\sqrt{x}\right) + 2\sinh\left(3\sqrt{x}\right)\sum_{k=1}^\infty I_{2k-1}^3\left(2\sqrt{x}\right) \\
        =& \cosh\left(3\sqrt{x}\right)\sum_{n\in\Z} I_{2n}^3\left(2\sqrt{x}\right)
        + \sinh\left(3\sqrt{x}\right)\sum_{n\in\Z} I_{2n+1}^3\left(2\sqrt{x}\right).
    \end{align*}
    It remains to prove that $\sum_{k=0}^\infty \frac{x^k}{(2k)!}a_{2k} = H(x)$, or, equivalently,
    \begin{align*}
        \sum_{k=0}^\infty \frac{x^{2k}}{(2k)!}a_{2k} = H\left(x^2\right).
    \end{align*}
    Using representation (\ref{eq:generating_function}) from Corollary \ref{prop:generating function} and Theorem \ref{theorem:Besselfunction}
    yields
    \begin{align*}
        \sum_{k=0}^\infty \frac{x^{2k}}{(2k)!}a_{2k} 
        &= \frac{1}{2}\int_0^\infty I_0^3\left(2 \sqrt{xt} \right) e^{-t}\diff t + \frac{1}{2}\int_0^\infty I_0^3\left(2 \sqrt{-xt} \right) e^{-t}\diff t
        \overset{(\ref{identity1})}{=} \frac{1}{2}e^{3x}\sum_{n\in\Z}I_n^3\left(2x\right)+\frac{1}{2}e^{-3x}\sum_{n\in\Z}I_n^3\left(-2x\right)\\
        &= \frac{1}{2}e^{3x}\sum_{n\in\Z}I_{2n}^3\left( 2x\right) + \frac{1}{2}e^{3x}\sum_{n\in\Z}I_{2n+1}^3\left( 2x\right) + \frac{1}{2}e^{-3x}\sum_{n\in\Z}\underbrace{I_{2n}^3\left(- 2x\right)}_{=I_{2n}^3\left( 2x \right)} + \frac{1}{2}e^{-3x}\sum_{n\in\Z} \underbrace{I_{2n+1}^3\left(- 2x\right)}_{=-I_{2n+1}^3\left(2x\right)}\\
        &=\underbrace{\frac{1}{2}\left(e^{3x}-e^{-3x}\right)}_{=\sinh\left(3x\right)} \sum_{n\in\Z} I_{2n}^3\left( 2x\right) + \underbrace{\frac{1}{2}\left(e^{3x}+e^{-3x}\right)}_{= \cosh\left( 3x \right)} \sum_{n\in\Z} I_{2n+1}^3\left( 2x\right) = H\left(x^2\right),
    \end{align*}
    which finishes the proof of convergence in \eqref{eq:approxT}. 
    Analogously, one gets in the odd case
    \begin{align*}
        \E\left(3+2Y_b\right)^{2k+1}
        \xrightarrow[b\rightarrow\infty]{}
        \E T^{2k+1}
        =
        \sum_{k_1+k_2+k_3=2k+1}\binom{2k+1}{k_1,k_2,k_3}^2, 
        \quad k\in\N_0.
    \end{align*} 
    Due to (\ref{formula:cos^n}), (\ref{formula:case1}), (\ref{formula:case2}), it follows that
    \begin{align*}
        \lim_{b\rightarrow \infty}\E\left(3+2Y_b \right)^{2k+1} 
        =& 
        \underbrace{\sum_{k_1=0}^k \sum_{k_2=0}^{k-k_1}\sum_{k_4=0}^{k_3}3^{2k_1+1}\frac{2k+1}{2k_1+1}\binom{k}{k_1,k_2,k_4,k_3-k_4}^2 \binom{2k}{k}\binom{2k_1}{k_1}^{-1}}_{=: \widetilde{r}_k}\\
        &+\underbrace{2 \sum_{k_1=0}^{k}\sum_{k_2=0}^{k-k_1}\sum_{k_4=0}^{k_3-1}\sum_{n=0}^{k_2}3^{2k_1}\binom{{\scriptstyle 2k+1}}{{\scriptstyle 2k_1}}\binom{{\scriptstyle 2(k-k_1)+1}}{{\scriptstyle 2k_2+1,2k_4+1,2(k_3-k_4)-1}}\binom{{\scriptstyle 2k_2+1}}{{\scriptstyle n}}\binom{{\scriptstyle 2k_4+1}}{{\scriptstyle k_4-k_2+n}}\binom{{\scriptstyle 2(k_3-k_4)-1}}{{\scriptstyle k_3-k_2-k_4-1+n}}}_{=: \widetilde{s}_k}\\
        &+\underbrace{2 \sum_{k_1=0}^{k}\sum_{k_2=0}^{k-k_1}\sum_{k_4=0}^{k_3}\sum_{n=0}^{k_2-1}3^{2k_1+1}\binom{{\scriptstyle 2k+1}}{{\scriptstyle 2k_1+1}}\binom{{\scriptstyle 2(k-k_1)}}{{\scriptstyle 2k_2,2k_4,2(k_3-k_4)}}\binom{{\scriptstyle 2k_2}}{{\scriptstyle n}}\binom{{\scriptstyle 2k_4}}{{\scriptstyle k_4-k_2+n}}\binom{{\scriptstyle 2(k_3-k_4)}}{{\scriptstyle k_3-k_2-k_4+n}}}_{=: \widetilde{t}_k},
    \end{align*}
    where $k_3:=k-k_1-k_2$. 
    The generating functions of $\widetilde{r}_k,\widetilde{s}_k,\widetilde{t}_k$ are given as
    \begin{align*}
        \widetilde{H}_1(x) 
        &:= 
        \sum_{k=0}^\infty \frac{x^{k+1/2}}{(2k+1)!} \widetilde{r}_k = \sinh\left(3\sqrt{x}\right)I_0^3\left(2\sqrt{x}\right),\\
        \widetilde{H}_2(x) 
        &:= 
        \sum_{k=0}^\infty \frac{x^{k+1/2}}{(2k+1)!} \widetilde{s_k} 
        =~ 
        2\cosh\left(3\sqrt{x}\right)\sum_{a=0}^\infty I_{2a+1}^3\left(2\sqrt{x}\right),\\
        \widetilde{H}_3(x) 
        &:= 
        \sum_{k=0}^\infty \frac{x^{k+1/2}}{(2k+1)!} \widetilde{t}_k 
        =~ 
        2\sinh\left(3\sqrt{x}\right)\sum_{n=0}^\infty I_{2n}^3(2\sqrt{x}).
    \end{align*}
    Finally, we have
    \begin{align*}
        \widetilde{H}(x) :=& \widetilde{H}_1(x) + \widetilde{H}_2(x) + \widetilde{H}_3(x) 
        = \sinh\left(3\sqrt{x}\right)\sum_{n\in\Z} I_{2n}^3\left(2\sqrt{x}\right)
        + \cosh\left(3\sqrt{x}\right)\sum_{n\in\Z} I_{2n+1}^3\left(2\sqrt{x}\right).
    \end{align*}
    It remains to prove that
    \begin{align*}
        \sum_{k=0}^\infty \frac{x^{2k+1}}{(2k+1)!}a_{2k+1} = \widetilde{H}\left(x^2\right).
    \end{align*}
    Formula (\ref{eq:generating_function_T_odd}) and the identity (\ref{identity1})
    yield
    \begin{align*}
        \sum_{k=0}^\infty \frac{x^{2k+1}}{(2k+1)!}a_{2k+1} 
        &= \frac{1}{2}\int_0^\infty I_0^3\left(2 \sqrt{xt} \right) e^{-t}\diff t - \frac{1}{2}\int_0^\infty I_0^3\left(2 \sqrt{-xt} \right) e^{-t}\diff t\\
        &=\underbrace{\frac{1}{2}\left(e^{3x}-e^{-3x}\right)}_{=\sinh\left(3x\right)} \sum_{n\in\Z} I_{2n}^3\left( 2x\right) + \underbrace{\frac{1}{2}\left(e^{3x}+e^{-3x}\right)}_{= \cosh\left( 3x \right)} \sum_{n\in\Z} I_{2n+1}^3\left( 2x\right) = \widetilde{H}\left(x^2\right),
    \end{align*}
    which finishes the proof of the odd case.
    
    Approximation \eqref{eq:approxH} follows from \eqref{eq:approxT} by the continuous mapping theorem taking distributional equality \eqref{eq:TdH} into account.
\end{proof}
\begin{proof}[Proof of Proposition \ref{prop:1}]
    Interpret independent random variables $U_1,U_2\sim{U}[0,2\pi]$  as polar angles on a unit circle. Then convergence \eqref{weyl} is equivalent to
    \begin{equation}\label{eq:ConvCircle}
        \left( e^{iX_b},e^{i \beta X_b} \right) 
        \xrightarrow[b\rightarrow\infty]{d}
        \left( e^{i U_1},e^{i U_2} \right).
    \end{equation}
     Taking the real and the imaginary parts of both sides of \eqref{eq:ConvCircle} leads to showing
    \begin{align*}
        (\cos(X_b),\sin(X_b),\cos(\beta X_b),\sin(\beta X_b))
        \xrightarrow[b\rightarrow\infty]{d}
        (\cos U_1,\sin U_1,\cos U_2,\sin U_2),
    \end{align*}
    or equivalently
    \begin{align*}
        \lim_{b\to\infty}\Phi_b(s,t,u,v) = I_0(\sqrt{s^2+t^2})I_0(\sqrt{u^2+v^2}), \quad s,t,u,v\in\R,
    \end{align*}
    where 
    \begin{align*}
        \Phi_b(s,t,u,v) 
        :&= 
        \E[\exp \{ s\cos(X_b) +t\sin(X_b)+u\cos(\beta X_b) +v\sin(\beta X_b) \}],\\
        I_0(\sqrt{s^2+t^2}) 
        &= 
        \E[\exp\{s\cos U_1 +t\sin U_1\}]
    \end{align*}
    are the corresponding characteristic functions.
    Now we check the convergence of moments.
    Noting that
    \begin{align*}
        \Phi_b(s,t,u,v) 
        = 
        \sum_{j,k,l,m\ge 0}\frac{s^j t^k u^l v^m}{j!k!l!m!}
        \E[\cos^j(X_b) \sin^k(X_b) \cos^l(\beta X_b) \sin^m(\beta X_b)]
    \end{align*}
    and
    \begin{align*}
       I_0(\sqrt{s^2+t^2})
        =& 
        \sum_{\substack{m\ge 0\\ m\text{ even}}}\frac{(s^2+t^2)^{m/2}}{(m/2)!(m/2)!2^{m}} = \sum_{\substack{m\ge 0\\ m \text{ even}}}\sum_{\substack{j+k=m\\ j,k \text{ even}}}\binom{m/2}{j/2}\frac{s^j t^k}{(m/2)!(m/2)!2^{m}} \\
        =& 
        \sum_{\substack{j,k\ge 0\\ j,k\text{ even}}} \frac{s^j t^k}{((j+k)/2)!(j/2)!(k/2)!2^{j+k}},
    \end{align*}
    it suffices to show that
    \begin{align*}
        & \lim_{b\to\infty}\E[\cos^j(X_b) \sin^k(X_b) \cos^l(\beta X_b) \sin^m(\beta X_b)] \\
        &= \begin{cases}
            \displaystyle
            \frac{ j! k! l! m!}{(j+k)/2)!(j/2)!(k/2)!((l+m)/2)!(l/2)!(m/2)!2^{j+k+l+m}}, & j,k,l,m  \quad {\rm even}, \\
            0, & \mathrm{otherwise}.
        \end{cases}
    \end{align*}
    This can be confirmed using relations \eqref{formula:cos^n}--\eqref{formula:case2} similar to the proof of Theorem \ref{theorem:approximation}.
\end{proof}

\section{Summary}\label{sec:summary}
This paper investigates the distribution of a randomly chosen eigenvalue $H$ ($T$, respectively) of the adjacency matrix of two infinite regular lattices on the plane: hexagonal lattice (graphene) and its dual, the triangulation of the plane. 
Explicit formulae for the probability densities, moment generating functions, and characteristic functions have been obtained. 
A connection to symmetric random walks on these lattices as well as planar random flights was established. 
We presented a direct simulation approach for generating random eigenvalues $T$ and $H$. 
Additionally, we provide an approximation theorem, which approximates these eigenvalues by a simple function involving the sum of three cosines of a uniformly distributed random variable with irrational frequencies. 
It was also shown how this approximation is related to the ergodic theory. 
As a side effect of this research, a new identity for the series of third powers of modified Bessel functions $I_n(\cdot)$, $n\in\Z$, was proven. 
This series can be thus expressed as an integral of $I_0^3(\cdot)$.

\subsection*{Acknowledgments}
The authors express their sincere gratitude to Christoph Koutschan and Nobuki Takayama for their invaluable insights and critical feedback on Theorem \ref{theorem:Besselfunction} regarding the cubic Bessel identity.
They also extend their appreciation to Mr. Minahan for his detailed feedback.

\bigskip

\bibliography{Literature}{}
\bibliographystyle{abbrv}

\end{document}